\newtheorem{theorem}{Theorem}
\newtheorem{proposition}[theorem]{Proposition}
\newtheorem{corollary}[theorem]{Corollary}
\newtheorem{lemma}[theorem]{Lemma}
\DeclareMathOperator{\GL}{GL}
\newcommand{\Sym}{\ensuremath{\text{Sym}}}
\begin{document}


\title{\textbf{Local commensurability graphs \\ of solvable groups}}

\author{Khalid Bou-Rabee\thanks{K.B. supported in part by NSF grant
    DMS-1405609 and Cycle 46 PSC-CUNY Research Award} \and Chen Shi}

\maketitle


\begin{abstract}
The \emph{commensurability index} between two subgroups $A, B$ of a group $G$ is $[A : A \cap B] [B : A\cap B]$. This gives a notion of distance amongst finite-index subgroups of $G$, which is encoded in the \emph{p-local commensurability graphs} of $G$. We show that for any metabelian group, any component of the $p$-local commensurabilty graph of $G$ has diameter bounded above by 4.
However, no universal upper bound on diameters of components exists for the class of finite solvable groups. In the appendix we give a complete classification of components for upper triangular matrix groups in $\GL(2, \mathbb{F}_q)$.
\end{abstract}

Let $G$ be a group and $p$ a prime number.
The \emph{$p$-local commensurability graph} of $G$ has all finite-index subgroups of $G$ as vertices, and edges are drawn between $A$ and $B$ if $[A: A \cap B][B:A \cap B]$ is a power of $p$. 
This graph was first introduced by K.B. and D.~Studenmund in \cite{BD15} where they proved results regarding the diameters of components.
The \emph{connected diameter} of a graph is the supremum over all graph diameters of every component of the graph. 
 In \cite{BD15}, it was shown that every $p$-local commensurability graph of any nilpotent group has connected diameter less than or equal to one. 
 In contrast, they also showed that every nonabelian free group has connected diameter equal to infinity for all of their $p$-local commensurability graphs. 
Here we show that solvable groups, in a sense, fill in the space between free groups and nilpotent groups.

Our first result shows that metabelian groups, like nilpotent groups, have a uniform bound on the connected diameters of their $p$-local commensurability graphs. This behavior is peculiar since many group invariants (e.g., subgroup growth \cite{MR1978431}, word growth \cite{MR1751083}, residual girth growth \cite{BC14, BD14}) do not distinguish metabelian groups in the class of non-nilpotent solvable groups.

\begin{theorem} \label{thm:mainB}
The connected diameter of the $p$-local commensurability graph of any metabelian group is at most $4$.
\end{theorem}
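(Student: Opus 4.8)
The plan is to reduce to finite groups, then to reduce to the case where the derived subgroup is a $p'$-group, where the graph turns out to be a disjoint union of cliques. Throughout, $N=G'$ denotes the derived subgroup, which is abelian since $G$ is metabelian.

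\emph{Reduction to finite metabelian groups.} Suppose finite-index subgroups $A,B$ lie in a common component, via a path $A=A_0,A_1,\dots ,A_n=B$. Then $D=\bigcap_i A_i$ has finite index in $G$, hence has only finitely many conjugates, so its normal core $K=\bigcap_{g\in G} gDg^{-1}$ is a finite-index normal subgroup of $G$ contained in every $A_i$. In $\overline G:=G/K$ (again metabelian, being a quotient) the path descends; for subgroups containing $K$ the indices $[A_i:A_i\cap A_{i+1}]$ are unchanged, and conversely any path in $\overline G$ lifts to a path in $G$ among subgroups containing $K$. So it suffices to bound the connected diameter for an arbitrary finite metabelian group.

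\emph{Reduction to $p'$ derived subgroup.} Let $G$ be finite metabelian and $P_0$ the Sylow $p$-subgroup of the abelian group $N=G'$, a characteristic subgroup of $N$ and hence normal in $G$. For every subgroup $A$ one has $[AP_0:A]=[P_0:P_0\cap A]$, a power of $p$, so $A$ is adjacent to $AP_0$; thus every vertex lies within distance $1$ of a subgroup containing $P_0$, and such subgroups are precisely the preimages of subgroups of $G/P_0$, with adjacency preserved on the nose. Since $(G/P_0)'=N/P_0$ is a $p'$-group, it is enough to prove: if $\overline G$ is finite metabelian with $\overline G'$ a $p'$-group, then any two subgroups in a common component are already adjacent (connected diameter $\le 1$). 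Granting this, for the original finite $G$ the sequence $A,\ AP_0,\ BP_0,\ B$ is a path of length $\le 3$, in particular of length $\le 4$.

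\emph{The clique structure when $\overline G'$ is a $p'$-group.} Every subgroup $A\le\overline G$ is metabelian with $A'\le\overline G'$ a $p'$-group, so $A'$ is a normal $p'$-subgroup of $A$ with abelian quotient $A/A'$; since any Hall $p'$-subgroup of $A$ contains $A'$ and maps onto the unique Hall $p'$-subgroup of $A/A'$, the group $A$ has a \emph{unique} Hall $p'$-subgroup $\lambda(A)$. For subgroups $A,B\le\overline G$, the condition that $[A:A\cap B][B:A\cap B]$ be a power of $p$ is equivalent to $A$ and $B$ having a common Hall $p'$-subgroup: if that index condition holds, then the (unique) Hall $p'$-subgroup of $A\cap B$ — which is again metabelian with $p'$ derived subgroup — has order $|A|_{p'}=|B|_{p'}$ and hence is Hall $p'$ in both; conversely a common Hall $p'$-subgroup $L\le A\cap B$ forces $[A:A\cap B]\mid[A:L]=|A|_p$, and likewise for $B$. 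By uniqueness this common subgroup must be $\lambda(A)=\lambda(B)$, so adjacency \emph{is} the equivalence relation $\lambda(A)=\lambda(B)$; hence every connected component is a clique and the connected diameter is $\le 1$.

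\emph{Where the work is.} The two reductions are bookkeeping: one checks that passing to $G/K$ and inflating by $P_0$ only ever multiply or divide the pertinent indices by powers of $p$, so adjacency is preserved and short paths lift. The substantive point is the last step — that metabelianness together with a $p'$ derived subgroup forces a unique Hall $p'$-subgroup inside every subgroup — which is exactly what makes adjacency transitive and collapses components to cliques. (This line of argument in fact yields the sharper bound $3$; the stated bound $4$ leaves room to spare.)
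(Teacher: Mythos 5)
Your proof is correct, but the key step is genuinely different from the paper's. Both arguments begin the same way (pass to the finite quotient by the normal core, then exploit the normal Sylow $p$-subgroup $P_0$ of the abelian derived subgroup, pushing a path $A_i$ to the path $A_iP_0$ --- your ``bookkeeping'' is exactly Lemmas \ref{lem:VQandWQ} and \ref{lem:padjup} and \cite[Lemma 8]{BD15}). After that the paper proves the more general Theorem \ref{thm:mainA}: it uses a rigidity lemma (Lemma \ref{lem:final}, that $V_iQ\cap G_2$ is constant along the path) to pass to a quotient where all the $V_iQ$ become abelian, and then manufactures a single hub vertex $\pi^{-1}(H)$, with $H$ a Hall $\Pi$-subgroup, adjacent to both endpoints. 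You instead quotient by $P_0$ outright and prove a cleaner structural fact: when $\overline G{}'$ is a $p'$-group, every subgroup $A$ has a unique (hence normal) Hall $p'$-subgroup $\lambda(A)$, and $p$-adjacency is precisely the equivalence relation $\lambda(A)=\lambda(B)$, so components are cliques; this bypasses the rigidity lemma and gives the sharper conclusion that $AP_0$ and $BP_0$ are directly adjacent (distance $3$ in edges between $A$ and $B$; note the paper's ``length'' counts vertices, under which your path $A$--$AP_0$--$BP_0$--$B$ still has length $4$). It is worth observing that your clique argument never uses that $\overline G/\overline G{}'$ is abelian beyond the automatic fact that $A/A'$ is abelian, so it only needs $\overline G{}'$ to be a solvable $p'$-group --- i.e., your method would also recover Theorem \ref{thm:mainA} and its structural refinement, while the paper's route has the advantage of being stated and proved at that level of generality from the outset. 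Your characterization is also consistent with, and essentially explains, the appendix's classification of components of $\Gamma_p(P(2,q))$ as complete (when $p\neq q$, so $P_0=1$) or star-shaped with centers $AP_0$ (when $p=q$).
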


Theorem \ref{thm:mainB} follows quickly from the following more technical result. Theorem \ref{thm:mainA}, for instance, demonstrates that there exists many solvable groups that are not metabelian yet have uniform bounds on the connected diameters of their $p$-local commensurability graphs (e.g., the group of upper triangular matrices in $\GL(n, \mathbb{F}_q)$, for $n > 1$ and $q$ a prime).
The proofs of Theorems \ref{thm:mainB} and \ref{thm:mainA} appear in \S \ref{sec:thmA}.

\begin{theorem} \label{thm:mainA}
Let $G$ be a finite group. If a $p$-Sylow subgroup of $[G,G]$ is normal in $G$, then the connected diameter of the $p$-local commensurability graph of $G$ is at most 4.
\end{theorem}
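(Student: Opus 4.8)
\emph{Proof plan.} Write $P$ for the Sylow $p$-subgroup of $[G,G]$ assumed normal in $G$; then $P=O_p([G,G])$, and in $\bar G:=G/P$ the commutator subgroup $[\bar G,\bar G]=[G,G]/P$ has order prime to $p$. To each subgroup $H\le G$ I would attach two neighbours in the $p$-local commensurability graph: the subgroup $HP$ (a subgroup because $P\trianglelefteq G$), obtained by ``pushing up'', and $O^p(H)$, the smallest normal subgroup of $H$ with $p$-group quotient --- equivalently the subgroup generated by the elements of $H$ of order prime to $p$ --- obtained by ``pushing down''. The plan is to use $H\mapsto HP$ to transport the problem into $\bar G$, where the presence of a commutator subgroup of order prime to $p$ makes $O^p$ rigid enough to collapse each component to a star.

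First I would record the basic facts about pushing up. Since $[HP:H]=[P:P\cap H]$ is a power of $p$, $H$ is adjacent to $HP$. If moreover $H$ is adjacent to $K$ then $[H:H\cap K]$ is a power of $p$, and from the identity $[HP:(H\cap K)P]=[H:H\cap K]/[H\cap P:(H\cap K)\cap P]$ one gets that $[HP:(H\cap K)P]$ is a power of $p$; since $(H\cap K)P\le HP\cap KP$, both $[HP:HP\cap KP]$ and $[KP:HP\cap KP]$ are powers of $p$, so $HP$ is adjacent to $KP$. Consequently a walk $H=H_0\sim\cdots\sim H_n=K$ in the graph of $G$ yields a walk $H_0P\sim\cdots\sim H_nP$ through subgroups all containing $P$; because index products among subgroups containing $P$ are unchanged on passing to $\bar G$, this projects to a walk in the graph of $\bar G$, so the images $\overline{HP}$ and $\overline{KP}$ lie in one component of the graph of $\bar G$.

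The crux is the following lemma: \emph{if $\Gamma$ is a finite group whose commutator subgroup $N:=[\Gamma,\Gamma]$ has order prime to $p$, then every component of its $p$-local commensurability graph is a clique.} To prove it, note that any $p$-group quotient of $\Gamma$ kills $N$ (the image of $N$ is both a $p$-group and a quotient of the $p'$-group $N$), so the largest $p$-group quotient of $\Gamma$ is that of the abelian group $\Gamma/N$; hence $L:=O^p(\Gamma)$ is the preimage in $\Gamma$ of the Hall $p'$-subgroup of $\Gamma/N$, a normal subgroup of order prime to $p$. For any $H\le\Gamma$ the quotient $H/(H\cap L)$ embeds in the $p$-group $\Gamma/L$ while $H\cap L$ has order prime to $p$, and these two facts force $O^p(H)=H\cap L$. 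Now if $H$ and $K$ are adjacent, then $[H:H\cap K]$ is a power of $p$, so (using $H\cap L\trianglelefteq H$) the index $[H\cap L:(H\cap L)\cap K]$ divides a power of $p$ and also divides the $p'$-number $|H\cap L|$, hence equals $1$; therefore $H\cap L=K\cap L$, that is $O^p(H)=O^p(K)$. So $O^p$ is constant on each component; and whenever $H,K$ satisfy $O^p(H)=O^p(K)=:D$ we have $D\le H\cap K$ with $H/D$ and $K/D$ both $p$-groups, whence $[H:H\cap K]$ and $[K:H\cap K]$ are powers of $p$ and $H\sim K$. This proves the lemma.

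Assembling: given $H,K$ in a common component of the graph of $G$, the push-up step puts $\overline{HP}$ and $\overline{KP}$ in a common component of the graph of $\bar G$; the lemma gives $\overline{HP}\sim\overline{KP}$ (or $\overline{HP}=\overline{KP}$), which lifts back to $HP\sim KP$ (or equality) in the graph of $G$; and $H\sim HP$, $K\sim KP$. Hence $d(H,K)\le 1+1+1=3$, which in particular establishes the claimed bound of $4$. I expect the lemma to be the only genuine obstacle: the key realization is that a commutator subgroup of order prime to $p$ makes $O^p(\,\cdot\,)$ equal to intersection against the fixed subgroup $O^p(\Gamma)$ and makes it an invariant of commensurability classes, after which everything reduces to index bookkeeping. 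One secondary point needing attention is that a walk in the graph of $G$ need not stay among subgroups containing $P$, so one must verify that pushing up edge by edge really yields a walk in the quotient graph.
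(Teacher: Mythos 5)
Your proof is correct, and in the key step it takes a genuinely different route from the paper. Both arguments begin the same way: push every vertex up by the normal Sylow $p$-subgroup $P=Q$ of $[G,G]$, using the facts that $H$ is adjacent to $HP$ and that adjacency of $H,K$ forces adjacency of $HP,KP$ (these are exactly the paper's Lemmas \ref{lem:VQandWQ} and \ref{lem:padjup}). After that the paper stays in $G$: it forms $A=\langle V_1Q,\dots,V_mQ\rangle$, uses a rigidity lemma to see that $V_iQ\cap G_2$ is independent of $i$, passes to the quotient by this common subgroup where all the $\pi(V_iQ)$ become abelian, and then shows a Hall $\Pi$-subgroup $H$ of the first image is a Hall $\Pi$-subgroup of every image, so that $\pi^{-1}(H)$ serves as a hub adjacent to both $V_1Q$ and $V_mQ$ --- yielding a path with $4$ edges. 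You instead pass to $\bar G=G/P$, note that $[\bar G,\bar G]$ has order prime to $p$, and prove the clean structural lemma that in any finite group whose commutator subgroup has $p'$-order, $H\mapsto O^p(H)=H\cap O^p(\Gamma)$ is a complete invariant of the components of $\Gamma_p(\Gamma)$ and each component is a clique. (Your lemma is correct: $L=O^p(\Gamma)$ is a normal $p'$-subgroup with $p$-group quotient, $O^p(H)=H\cap L$ for every $H$, and the divisibility argument $[H\cap L:(H\cap L)\cap K]\mid\gcd(p\text{-power},|H\cap L|)=1$ goes through because $H\cap L\trianglelefteq H$ makes $(H\cap L)(H\cap K)$ a subgroup.) This buys you two things: a sharper conclusion --- $\overline{HP}$ and $\overline{KP}$ are directly adjacent, so the path $H-HP-KP-K$ has only $3$ edges, improving the bound from $4$ to $3$ --- and a conceptual explanation of the paper's rigidity lemma (Lemma \ref{lem:final}), since your invariant $\overline{HP}\cap L$ refines the paper's invariant $VQ\cap G_2$. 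The paper's route, by contrast, avoids naming $O^p$ and works entirely with Hall subgroups inside an abelian quotient, at the cost of needing the extra hub vertex.
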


Our next major result shows that the above theorems cannot be extended to arbitrary solvable groups. We prove this in \S \ref{sec:thmC}.

\begin{theorem} \label{thm:mainC}
For any $D > 0$, there exists a finite solvable group whose $3$-local commensurability graph has connected diameter greater than $D$.
\end{theorem}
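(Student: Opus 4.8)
The plan is to exhibit, for each $n\ge 1$, a finite solvable group $G_n$ whose $3$-local commensurability graph has a component of diameter at least $n$; choosing $n>D$ then proves the statement. The starting observation is a convenient description of adjacency: if $G$ is a finite group with $|G|$ divisible only by $2$ and $3$, then $[A:A\cap B][B:A\cap B]$ is a power of $3$ if and only if $A$ and $B$ possess a common Sylow $2$-subgroup. (Each index is odd in that case, so a Sylow $2$-subgroup of $A\cap B$ must be Sylow in both $A$ and $B$; the converse is clear.) In particular, all vertices of a single component have equal $2$-part of order, and an edge amounts to replacing a Sylow $2$-subgroup by one that fits inside the same subgroup. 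The whole construction is designed so that between two well-chosen subgroups this replacement must be carried out gradually, in $n$ steps.

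\emph{The groups.} Fix a finite solvable $\{2,3\}$-group $\Gamma$ for which $[\Gamma,\Gamma]$ has a non-normal Sylow $3$-subgroup; one may take $\Gamma=\mathrm{Sym}(4)$, where $[\Gamma,\Gamma]=\mathrm{Alt}(4)$ has four Sylow $3$-subgroups and no non-trivial normal $3$-subgroup. Put $G_0=\Gamma$ and $G_{k+1}=G_k\wr C_2$, so that $G_n$ is the iterated wreath product $\Gamma\wr C_2\wr\cdots\wr C_2$ with $n$ factors $C_2$. Each $G_n$ is solvable, and $[G_n,G_n]$ contains an isomorphic copy of $\mathrm{Alt}(4)$; a normal Sylow $3$-subgroup $Q$ of $[G_n,G_n]$ would intersect this copy trivially (as $\mathrm{Alt}(4)$ has trivial $3$-core), whence $Q\cdot\mathrm{Alt}(4)$ would be a subgroup of $[G_n,G_n]$ with $3$-part $3|Q|$ exceeding the $3$-part $|Q|$ of $|[G_n,G_n]|$, absurd; thus the Sylow $3$-subgroup of $[G_n,G_n]$ is non-normal in $G_n$ and Theorem \ref{thm:mainA} does not apply. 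Moreover $[G_n,G_n]$ is non-abelian, so $G_n$ has derived length at least $3$ and Theorem \ref{thm:mainB} does not apply. We view $G_n$ as acting on a rooted depth-$n$ binary tree with a copy of the $\Gamma$-set attached at each leaf, and refer to the $n+1$ associated levels $0,1,\dots,n$.

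\emph{A far-apart pair and a path between them.} Let $P\le\Gamma$ be a Sylow $2$-subgroup (of order $8$ when $\Gamma=\mathrm{Sym}(4)$). Inside $G_n$, let $A$ be the copy of $\Gamma$ sitting at the root, so that its Sylow $2$-subgroup is $P$ in ``standard position'', and let $B$ be a copy of $\Gamma$ whose Sylow $2$-subgroup is a conjugate of $P$ displaced to the deepest level $n$. These subgroups have equal order, so proving they are joined also shows they lie in one component. We join them by vertices $A=A_0,A_1,\dots,A_n=B$ in which $A_k$ is a copy of $\Gamma$ differing from $A_{k-1}$ only in that its Sylow $2$-subgroup has been slid from level $k-1$ to level $k$; the subgroups $A_{k-1}$ and $A_k$ share a Sylow $2$-subgroup because the two order-$8$ groups involved are conjugate inside $A_{k-1}$ by an order-$3$ element localized at level $k-1$ (an $\mathrm{Alt}(4)$-style $3$-cycle). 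Hence $d(A,B)\le n$.

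\emph{The lower bound --- and the main obstacle.} It remains to prove $d(A,B)\ge n$, which yields the desired diameter bound and hence Theorem \ref{thm:mainC} upon taking $n>D$. The plan is to produce a function $\delta$ on the vertices of the component of $A$, with values in $\{0,1,\dots,n\}$, satisfying $\delta(A)=0$, $\delta(B)=n$, and $|\delta(C)-\delta(C')|\le 1$ for every edge $C\sim C'$; a $1$-Lipschitz function with these endpoint values forces every path from $A$ to $B$ to have length at least $n$. Intuitively $\delta(C)$ records the lowest tree-level at which a Sylow $2$-subgroup of $C$ has been displaced from the standard position, and the endpoint values together with $\delta\le n$ are immediate. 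The crux is the $1$-Lipschitz estimate: an edge $C\sim C'$ furnishes a common Sylow $2$-subgroup $S$ of $C$ and $C'$, and one must show that the Sylow $2$-subgroups of $C$ and those of $C'$ cannot be displaced at levels differing by more than one --- equivalently, that the order-$3$ elements of a subgroup of $G_n$ can conjugate its Sylow $2$-subgroup only layer by layer. This is a structural fact about subgroups of iterated wreath products, concerning where their $3$-elements can live and which $2$-subgroups such elements normalize; establishing it, and thereby ruling out the ``long-range'' edges that would collapse the diameter, is the main difficulty of the argument, while the rest is routine manipulation of the adjacency criterion of the first paragraph.
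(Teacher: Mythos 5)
Your proposal is a plan rather than a proof: the entire content of the theorem lives in the lower bound $d(A,B)\ge n$, and you explicitly defer it (``establishing it \ldots is the main difficulty of the argument''). The adjacency criterion (two subgroups of a $\{2,3\}$-group are $3$-adjacent iff they share a common Sylow $2$-subgroup) and the upper-bound path are the routine parts; what is missing is precisely the mechanism that rules out long-range edges. Concretely, a vertex $C$ in the component of $A$ need not be a ``copy of $\Gamma$'' --- it is an arbitrary subgroup whose order has $2$-part $8$ but possibly a large $3$-part, and such a $C$ can contain $3$-elements supported on many coordinates of the iterated wreath product simultaneously; conjugation by such an element could in principle move a Sylow $2$-subgroup across several levels in one step. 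Your function $\delta$ is also not actually defined on such general vertices (you must first show that all Sylow $2$-subgroups of a given $C$ sit at the same ``displacement level''), so even the statement of the $1$-Lipschitz property is incomplete. Until you prove a structural lemma pinning down which $2$-subgroups the $3$-elements of a subgroup of $G_n$ can normalize or conjugate, the argument does not go through.

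For comparison, the paper confronts exactly this difficulty and resolves it by different means: it passes to the coarser $3$-containment graph (edges given by $3$-power-index containment), establishes by a finite (GAP) computation that in $\Sym_4$ every path from $\left<(1,2)\right>$ to $\left<(3,4)\right>$ must traverse an edge both of whose vertices contain a transposition $(i,j)$ with $i\in\{1,2\}$ and $j\in\{3,4\}$, and then uses the construction $H^4\rtimes\Sym_4$ so that such an edge forces a repeated vertex in a projected geodesic of $H$, giving growth of $cd_3$ under iteration; a separate lemma converts containment-graph diameter into commensurability-graph diameter. Something playing the role of that computational lemma --- a verified, exhaustive statement about which conjugations are available one step away from each $A_k$ --- is what your sketch still needs before it can be called a proof.
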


It would be interesting to determine whether this result extends to other primes.
As a corollary of Theorem \ref{thm:mainC}, we arrive at a new proof of \cite[Theorem 2]{BD15} for the case $p = 3$. We note that in \cite{BD15} alternating groups are used in a significant way, while here we use solvable groups (which escapes the use of the classification of finite simple groups). The proof of the corollary also appears in  \S \ref{sec:thmC}.

\begin{corollary} \label{cor:main}
Let $F$ be a nonabelian free group.
The $3$-local commensurability graph of $F$ has infinite connected diameter.
\end{corollary}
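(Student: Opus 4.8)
The plan is to deduce the corollary from Theorem \ref{thm:mainC} together with the standard fact that every finite group is a quotient of a nonabelian free group. Let me think about how the $p$-local commensurability graph behaves under quotients. If $F \twoheadrightarrow Q$ is a surjection with kernel $N$, then the finite-index subgroups of $F$ containing $N$ correspond bijectively to the finite-index subgroups of $Q$ via the correspondence theorem, and this correspondence preserves indices: $[A:A\cap B] = [A/N : (A/N)\cap(B/N)]$ for $A, B \supseteq N$. Hence the induced subgraph of the $p$-local commensurability graph of $F$ on the vertex set $\{A \le F : [F:A] < \infty,\ A \supseteq N\}$ is isomorphic, as a graph, to the full $p$-local commensurability graph of $Q$.

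The key step is then to observe that this induced subgraph is in fact a union of connected components of the graph of $F$ — equivalently, that no finite-index subgroup $A \supseteq N$ is joined by an edge to a finite-index subgroup $B \not\supseteq N$. I would need to verify this. The point is that if $[A:A\cap B][B:A\cap B]$ is a power of $p$ and $A \supseteq N$, then I want to conclude $B \supseteq N$; this is not literally true for a single edge, so instead the argument should be: within a component of the graph of $F$, being commensurable "up to $p$-power" with something containing $N$ does not force containing $N$. The cleaner route is to argue directly with distances. Take $Q$ a finite solvable group as in Theorem \ref{thm:mainC} whose $3$-local commensurability graph has a component of diameter greater than $D$, witnessed by two subgroups $\bar A, \bar B$ of $Q$ at distance $> D$ in that component. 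Pull these back to finite-index subgroups $A, B$ of $F$ containing $N$. Any path from $A$ to $B$ in the graph of $F$ of length $\le D$ would, after intersecting each vertex with $N$ — or rather, after passing to the subgroup generated with $N$ — project to a path of length $\le D$ in the graph of $Q$; this would contradict the distance bound, provided the projection genuinely sends edges to edges or identifies consecutive vertices. So the diameter of the component of $A$ in the graph of $F$ exceeds $D$. Since $D$ was arbitrary, the connected diameter of the $3$-local commensurability graph of $F$ is infinite.

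The main obstacle is the projection step: a finite-index subgroup $C \le F$ on a path need not contain $N$, so "project to $Q$" is not immediate. The fix is to replace $C$ by $CN$ (or $C \cap$ something) and check that the commensurability-index condition is preserved or improved under this replacement — i.e., that if $[C_1:C_1\cap C_2][C_2:C_1\cap C_2]$ is a $p$-power then so is $[C_1N:C_1N\cap C_2N][C_2N : C_1N \cap C_2N]$, using that $[C_iN : C_i](C_i\cap N)$-type indices and the multiplicativity of index behave well, and that a quotient of a number dividing a $p$-power is a $p$-power. Granting this, the map $C \mapsto CN/N$ sends a length-$\le D$ path from $A$ to $B$ in $F$ to a length-$\le D$ walk from $\bar A$ to $\bar B$ in $Q$ (consecutive vertices either stay adjacent or coincide), contradicting $d_Q(\bar A, \bar B) > D$. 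I would also remark, as the paper does, that this gives a proof for $p = 3$ avoiding the classification of finite simple groups, since only solvable $Q$ are used.
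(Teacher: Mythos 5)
Your projection step is fine: given a surjection $\pi\colon F\to Q$ with kernel $N$, the map $C\mapsto CN/N$ does send a path in $\Gamma_3(F)$ to a walk in $\Gamma_3(Q)$, since $(C_1\cap C_2)N\le C_1N\cap C_2N$ and, by \cite[Lemma 5]{BD15}, $[\pi(C_1):\pi(C_1\cap C_2)]$ divides $[C_1:C_1\cap C_2]$; so $[C_1N:C_1N\cap C_2N]$ divides a power of $3$. This is exactly the content of \cite[Lemma 8]{BD15}, which the paper invokes as a black box, so that part of your argument is both correct and consistent with the paper's route.

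The genuine gap is at the very first step: the ``standard fact'' you invoke is false for the group $F$ in the statement. A free group of finite rank $r$ surjects only onto $r$-generated groups, and the solvable groups $Q$ produced by Theorem \ref{thm:mainC} are \emph{not} boundedly generated: the abelianization of $BS(H)$ is $H_{\mathrm{ab}}\times \mathbb{Z}/2$, so the $k$-fold iterate $BS^{k}(\Sym_4)$ has abelianization $(\mathbb{Z}/2)^{k+1}$ and hence needs at least $k+1$ generators. Thus for $F$ of rank $2$ (the hardest and most relevant case) there is simply no surjection $F\twoheadrightarrow Q$ once $k\ge 2$, and your pullback subgroups $A,B$ do not exist. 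The paper circumvents this by first passing to a normal finite-index subgroup $\Delta\trianglelefteq F$ that is free of rank $|Q|$ (Nielsen--Schreier), which \emph{does} surject onto $Q$, and then using \cite[Lemma 8]{BD15} to transfer the diameter lower bound from $\Gamma_3(Q)$ through $\Gamma_3(\Delta)$ to $\Gamma_3(F)$. Your proof needs this intermediate subgroup (or some equivalent device) to be correct; with it inserted, the rest of your argument goes through.
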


In the appendix, we give an explicit description of the $p$-local commensurability graphs for the group of upper triangular matrices in $\GL_2(\mathbb{F}_q)$, as $p$ and $q$ varies over all primes.

We end the introduction with a brief history account. 
While the article, \cite{BD15}, began the study of local commensurability graphs with the goal of drawing fundamental group properties from residual invariants (a direction of much activity: \cite{KT2014}, \cite{BK12}, \cite{BM11}, \cite{KG2014}, \cite{BHP14}, \cite{BS13b}, \cite{KM12}, \cite{R12}, \cite{Patel:thesis}, \cite{MR1978431}), the study of graphs given by relations between subgroups goes back to work of B. Cs{\'a}k{\'a}ny and G. Poll{\'a}k in the 1960's \cite{MR0249328}. The $p$-local commensurability graphs are weighted pieces of what are called the \emph{intersection graphs} of a group (see, for instance,  \cite{MR3323326}). This graph has vertices consisting of proper subgroups where an edge is drawn between two groups if their intersection is non-trivial. The weights we add to these graphs significantly change the theory, and gives connections to A. Lubotzky and D. Segal's subgroup growth function (which, for every $n$, gives the number of subgroups of index $n$). Indeed, one can think of the theory of commensurability graphs as an interface between the theories of intersection graphs and subgroup growths.

\paragraph{Acknowledgements} The authors thank Michael Larsen and Daniel Studenmund for useful conversations. In particular, K. B. is grateful to Michael Larsen for suggesting the use of $p$-containment graphs that appear in the proof of Theorem \ref{thm:mainC}. 
Also, we are grateful to Daniel Studenmund for helpful corrections on an earlier draft.

\subsection*{Notation}

 Let $G$ be a group and $p$ be a prime number. We denote the $p$-local commensurability graph of $G$ by $\Gamma_p(G)$.
If $H$ and $M$ are two vertices in the same component of $\Gamma_{p}(G)$, then we say $H$ is \emph{$p$-connected} to $M$.
If $H$ and $M$ are adjacent in $\Gamma_p(G)$, then we say $H$ and $M$ are \emph{$p$-adjacent}. A path connecting $H$ and $M$ is called a \emph{$p$-path} and is denoted by $H-V_{1}-\dots-V_{m}-M$, where the \emph{length} of this path is defined to be $m+2$ and $H, V_{1}, \ldots, V_m, M$ are the vertices along the path.
 
\section{Metabelian groups: the proofs of Theorems \ref{thm:mainB} and \ref{thm:mainA}}
\label{sec:thmA}

We need some technical results for our proofs. In the following, we assume that $G$ is a finite group with derived series $G=G_{1}\rhd G_{2}\rhd \dots$
 given by $G_i = [G_{i-1}, G_{i-1}]$ for $i = 2, 3, \ldots$.

 \begin{lemma} \label{lem:VQandWQ}
 Let $Q$ be a normal $p$-subgroup of $G$.
 Let $V$ be a vertex of $\Gamma_{p}(G)$, then $V$ is p-adjacent to the group $VQ$.
 \end{lemma}
\begin{proof}
We need to show that $[V: V\cap VQ]$ and $ [VQ:V\cap VQ]$ are both powers of $p$. Clearly, we have $V\subset VQ$, then $[V:V\cap VQ]=[V:V]=1=p^{0}$.

Now, recalling a basic fact from algebra,
\begin{equation*}
|VQ|=\frac{|V||Q|}{|V\cap Q|},
\end{equation*}
gives
\begin{equation*}
[VQ:V\cap VQ]=[VQ:V]=\frac{|VQ|}{|V|}=\frac{\frac{|V||Q|}{|V\cap Q|}}{|V|}=\frac{|Q|}{|V\cap Q|}.
\end{equation*}
Since $Q$ is a $p$-group, $[VQ:V\cap VQ]$ is a power of $p$. It follows that $V$ is $p$-adjacent to $VQ$, as desired.
\end{proof}
\begin{lemma} \label{lem:padjup}
Let $Q$ be a normal $p$-subgroup of $G$.
Suppose $A,B < G$ and $A$ and $B$ are $p$-adjacent, then $AQ$ and $BQ$ are p-adjacent.
\end{lemma}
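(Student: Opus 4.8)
The plan is to squeeze the group $AQ\cap BQ$ between the ``obvious'' common subgroup $(A\cap B)Q$ below it and $AQ$ (resp. $BQ$) above it, and to control the index $[AQ:(A\cap B)Q]$ directly via the product formula. First I would record the standard identity $|XQ|=|X|\,|Q|/|X\cap Q|$, valid for any subgroup $X\le G$ since $Q\lhd G$ makes $XQ$ a subgroup. Applying it with $X=A$ and with $X=A\cap B$ gives
\[
[AQ:(A\cap B)Q]=\frac{|AQ|}{|(A\cap B)Q|}=\frac{|A|}{|A\cap B|}\cdot\frac{|(A\cap B)\cap Q|}{|A\cap Q|}=\frac{[A:A\cap B]}{[A\cap Q:(A\cap B)\cap Q]}.
\]
The numerator is a power of $p$ because $A$ and $B$ are $p$-adjacent, and the denominator is a power of $p$ because $A\cap Q$ is a subgroup of the $p$-group $Q$ (and $(A\cap B)\cap Q$ is a subgroup of $A\cap Q$, so the index makes sense). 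Since the left-hand side is an honest index of a genuine subgroup containment $(A\cap B)Q\subseteq AQ$, it is a positive integer; being a ratio of $p$-powers, it must in fact be a (non-negative) power of $p$. The identical computation with $A$ and $B$ interchanged shows $[BQ:(A\cap B)Q]$ is a power of $p$.

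Next I would climb from $(A\cap B)Q$ up to $AQ\cap BQ$. Normality of $Q$ gives $(A\cap B)Q\le AQ\cap BQ\le AQ$, so the tower law yields $[AQ:(A\cap B)Q]=[AQ:AQ\cap BQ]\cdot[AQ\cap BQ:(A\cap B)Q]$; in particular $[AQ:AQ\cap BQ]$ divides a power of $p$, hence is a power of $p$. Symmetrically $[BQ:AQ\cap BQ]$ divides $[BQ:(A\cap B)Q]$ and is a power of $p$. Multiplying, $[AQ:AQ\cap BQ]\,[BQ:AQ\cap BQ]$ is a power of $p$, which is exactly the assertion that $AQ$ and $BQ$ are $p$-adjacent.

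I do not expect a real obstacle here; this is essentially a bookkeeping argument with the product and tower formulas. The one point deserving a moment's care is the passage noted above --- that the ratio of $p$-powers $[A:A\cap B]/[A\cap Q:(A\cap B)\cap Q]$ is genuinely a power of $p$ with non-negative exponent --- which is safe precisely because we have independently identified it with the index $[AQ:(A\cap B)Q]$ of a subgroup containment, so no negative exponent can occur. Everything else is routine.
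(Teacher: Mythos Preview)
Your proof is correct and follows essentially the same strategy as the paper's: squeeze $AQ\cap BQ$ above a common subgroup whose index in $AQ$ is visibly a $p$-power, then invoke the tower law. The only cosmetic difference is that the paper uses $A\cap B$ itself (rather than $(A\cap B)Q$) as the intermediate subgroup, which makes $[AQ:A\cap B]=[A:A\cap B]\cdot[Q:A\cap Q]$ a \emph{product} of $p$-powers and so sidesteps your ratio-of-$p$-powers step entirely; your version is equally valid but requires the extra sentence you supplied about the ratio being an honest index.
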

\begin{proof}
We need to show that $\frac{|AQ|}{|AQ\cap BQ|}$ and $\frac{|BQ|}{|AQ\cap BQ|}$ are both powers of $p$.
To this end, we compute 
\begin{equation*}
\frac{|AQ|}{|A\cap B|}=\frac{|A||Q|}{|A\cap B||A\cap Q|}.
\end{equation*}
Since $Q$ is a $p$-group, then $[Q:A\cap Q]=\frac{|Q|}{|A\cap Q|}$ is power of $p$.
Hence, $\frac{|AQ|}{|A\cap B|}$ is a power of $p$.
Also, $A\cap B<AQ\cap BQ<AQ$.
Then 
\begin{equation*}
[AQ:A\cap B]=[AQ:AQ\cap BQ][AQ\cap BQ:A\cap B]
\end{equation*}
 is power of $p$.
Therefore, $[AQ:AQ\cap BQ]$ is a power of $p$. By symmetry, $[BQ:AQ\cap BQ]$ is also a power of $p$.
\end{proof}

\begin{lemma} \label{lem:padjcap}
Let $V$ and $W$ be subgroups of $G$.
If $V$ is $p$-adjacent to $W$ and $N\triangleleft G$. Then $V\cap N$ is $p$-adjacent to $W\cap N$.
\end{lemma}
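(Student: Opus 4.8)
The plan is to reduce the statement to index computations, exactly as in the preceding two lemmas, and to exploit the fact that intersecting with a normal subgroup $N$ behaves well with respect to the multiplicativity of indices. Concretely, we must show that both $[V\cap N : V\cap W\cap N]$ and $[W\cap N : V\cap W\cap N]$ are powers of $p$; by symmetry it suffices to handle the first. First I would observe that $V\cap W\cap N = (V\cap N)\cap(W\cap N)$, so the subgroup appearing in the denominator really is the intersection of the two candidate vertices, as required by the definition of $p$-adjacency. The key containment to set up is
\begin{equation*}
V\cap W \;\leq\; (V\cap N)(V\cap W) \;\leq\; V,
\end{equation*}
which lets us split $[V : V\cap W]$, known to be a power of $p$, as a product of two indices, forcing $[(V\cap N)(V\cap W) : V\cap W]$ to be a power of $p$.

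Next I would relate that last index to the one we actually want. Using the standard isomorphism $(V\cap N)/\big((V\cap N)\cap(V\cap W)\big)\cong (V\cap N)(V\cap W)/(V\cap W)$ — valid because $V\cap N$ is normal in $V$ (as $N\triangleleft G$), so the product $(V\cap N)(V\cap W)$ is a genuine subgroup and the second isomorphism theorem applies — we get
\begin{equation*}
[V\cap N : (V\cap N)\cap(V\cap W)] \;=\; [(V\cap N)(V\cap W) : V\cap W],
\end{equation*}
and the right-hand side was just shown to be a power of $p$. Since $(V\cap N)\cap(V\cap W) = V\cap W\cap N$, this is precisely the claim that $[V\cap N : V\cap W\cap N]$ is a power of $p$. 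Running the identical argument with the roles of $V$ and $W$ exchanged gives $[W\cap N : V\cap W\cap N]$ a power of $p$, and together these say $V\cap N$ is $p$-adjacent to $W\cap N$.

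The only point requiring a little care — and the place I would expect a careless write-up to slip — is the justification that $(V\cap N)(V\cap W)$ is a subgroup and that the second isomorphism theorem is being applied to a genuinely normal subgroup; this is where normality of $N$ in $G$ (hence of $V\cap N$ in $V$) is essential, and it is the sole hypothesis that distinguishes this lemma from a false statement about arbitrary subgroups. Everything else is routine index bookkeeping of the same flavor as Lemmas \ref{lem:VQandWQ} and \ref{lem:padjup}. An alternative, perhaps cleaner, route is to work with orders directly: write $[V\cap N : V\cap W\cap N] = |V\cap N|\,/\,|V\cap W\cap N|$ and compare with $|V|/|V\cap W|$ via the product formula $|(V\cap N)(V\cap W)| = |V\cap N|\,|V\cap W|\,/\,|V\cap W\cap N|$; I would pick whichever presentation reads most smoothly alongside the earlier proofs.
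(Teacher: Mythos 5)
Your proof is correct and follows essentially the same route as the paper: the decomposition you derive by hand, $[V:V\cap W]=[V:(V\cap N)(V\cap W)]\cdot[(V\cap N)(V\cap W):V\cap W]$ together with $[(V\cap N)(V\cap W):V\cap W]=[V\cap N:V\cap W\cap N]$, is precisely the identity the paper imports wholesale as \cite[Lemma 5]{BD15} applied to the quotient $G\to G/N$. One small caveat: the coset space $(V\cap N)(V\cap W)/(V\cap W)$ need not be a group (only $V\cap N$ is known to be normal in $V$, not $V\cap W$ in the product), so the index equality should be justified by the product formula $|AB|=|A|\,|B|/|A\cap B|$ --- which you correctly offer as an alternative at the end --- rather than by the second isomorphism theorem in that orientation.
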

\begin{proof}
Let $\pi :G\rightarrow G/N$, then by \cite[Lemma 5]{BD15}, with $H=V$, $K=V\cap W$, and $N=V\cap N$, we get 
\begin{equation*}
[V:V\cap W]=[\pi(V):\pi(V\cap W)][V\cap N: V\cap W\cap N]
\end{equation*}
Since $V$ is p-adjacent to $W$, then $[V:V\cap W]$ is a power of $p$. Hence, $[V\cap N:V\cap W\cap N]$ is a power of $p$. Similarly, $[W\cap N:W\cap V\cap N]$ is a power of $p$.
\end{proof}
The previous lemma gives us a rigidity result:
\begin{lemma} \label{lem:final}
Let $i$ be a natural number.
Let $Q$ be a $p$-subgroup of $G$ that is normal in $G$ (then $VQ$ and $WQ$ are groups) where $Q \cap G_i$ is the $p$-Sylow subgroup of $G_i$.
If $VQ$ is $p$-connected to $WQ$, then $VQ\cap G_{i}=WQ\cap G_{i}$.
\end{lemma}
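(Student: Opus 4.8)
The plan is to reduce to a single edge and then propagate equality along a path. Note first that each $G_i$ is characteristic in $G$, hence normal, so Lemma~\ref{lem:padjcap} is available with $N = G_i$; and record the elementary fact that, since $Q \cap G_i$ is a $p$-Sylow subgroup of $G_i$, every subgroup $H \le G_i$ with $H \supseteq Q \cap G_i$ has $[G_i : H]$ coprime to $p$. Now take any $p$-path $VQ = U_0 - U_1 - \dots - U_m = WQ$. Applying Lemma~\ref{lem:padjup} to each edge (with the normal $p$-subgroup $Q$) turns this into a path $U_0 Q - U_1 Q - \dots - U_m Q$; since $Q \le VQ$ and $Q \le WQ$ the endpoints are unchanged, so writing $W_j := U_j Q$ we obtain a $p$-path from $VQ$ to $WQ$ every vertex of which contains $Q$, and hence contains $Q \cap G_i$.

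It therefore suffices to prove the following one-edge statement: if $W, W'$ are subgroups of $G$ each containing $Q$ and $W$ is $p$-adjacent to $W'$, then $W \cap G_i = W' \cap G_i$; the lemma then follows by chaining this along $W_0 - \dots - W_m$. By Lemma~\ref{lem:padjcap} (with $N = G_i$), $W \cap G_i$ is $p$-adjacent to $W' \cap G_i$, so $[W \cap G_i : W \cap W' \cap G_i]$ is a power of $p$. On the other hand $W \cap W' \cap G_i \supseteq Q \cap G_i$, so by the elementary fact above $[G_i : W \cap W' \cap G_i]$ is coprime to $p$, and $[W \cap G_i : W \cap W' \cap G_i]$, which divides it, is also coprime to $p$. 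An integer that is simultaneously a power of $p$ and coprime to $p$ equals $1$, so $W \cap G_i = W \cap W' \cap G_i$, and by symmetry $W' \cap G_i = W \cap W' \cap G_i$ as well, giving $W \cap G_i = W' \cap G_i$.

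The one point requiring care is this initial reduction: one should not try to run the argument along the given path directly, because its intermediate vertices need not contain $Q$, and then Lemma~\ref{lem:padjcap} only yields $p$-adjacency — not equality — of the slices with $G_i$. Pushing the whole path up through $Q$ first, via Lemma~\ref{lem:padjup}, is exactly what forces every intersection to contain the $p$-Sylow subgroup $Q \cap G_i$ of $G_i$, after which the ``power of $p$ meets coprime to $p$'' collapse finishes the proof immediately.
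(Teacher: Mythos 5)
Your proof is correct, and it rests on the same key observation as the paper's: a subgroup of $G_i$ containing the $p$-Sylow subgroup $Q\cap G_i$ has index in $G_i$ coprime to $p$, so a $p$-power index between two such subgroups must be $1$. The routes differ in how the path is handled. The paper applies Lemma~\ref{lem:padjcap} to push the entire path into $G_i$, then projects to the quotient $G_i/(G_i\cap Q)$, whose order is coprime to $p$; by Proposition~\ref{prop:totaldisc} the graph $\Gamma_p(G_i/(G_i\cap Q))$ is totally disconnected, so all images along the path coincide, and since the two endpoints contain the kernel $G_i\cap Q$ they are themselves equal. This sidesteps the issue you flag about intermediate vertices not containing $Q$: total disconnectedness of the quotient graph applies to every vertex, not just those containing the kernel. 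You instead resolve that issue upstream, by first saturating the whole path with $Q$ via Lemma~\ref{lem:padjup} (a lemma the paper uses in the proof of Theorem~\ref{thm:mainA} but not in this lemma), after which an elementary edge-by-edge index computation closes the argument without any quotient. Your version is marginally more self-contained (it avoids Proposition~\ref{prop:totaldisc} and the implicit fact that $p$-adjacency is preserved under quotient maps); the paper's is marginally shorter. Both are valid.
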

\begin{proof}
Since $Q \cap G_i$ is the $p$-Sylow subgroup of $G_{i}$, we have that $p$ does not divide $|G_{i}/(G_i \cap Q)|$. Thus, every connected component of $\Gamma_p(G_{i}/(G_i \cap Q))$ is a singleton (see Proposition \ref{prop:totaldisc}).
In particular, if $VQ\cap G_{i}$ is $p$-connected to $WQ\cap G_{i}$, then because both these groups contain $Q \cap G_i$, we have $VQ\cap G_{i}=WQ\cap G_{i}$.
To finish, note that $G_{i}\triangleleft G$ and $VQ$ is $p$-connected to $WQ$. Thus, $VQ\cap G_{i}$ is $p$-connected to $WQ\cap G_{i}$ by Lemma \ref{lem:padjcap}, which completes the proof.
\end{proof}
%
%
%

\begin{proof}[Proof of Theorem \ref{thm:mainA}]
Let $G$ be a finite group and $p$ a prime number. Let $Q$ be the $p$-Sylow subgroup of $G_2 = [G,G]$ (it is unique by assumption since all $p$-Sylow subgroups are conjugate).
Let $V$ and $W$ be finite-index subgroups of $G$ that are $p$-connected by a path in $\Gamma_{p}(G)$ such that $V=V_{1}-V_{2}-\dots-V_{m}=W$.
Now, by Lemma \ref{lem:VQandWQ} and Lemma \ref{lem:padjup} we obtain a new path in $\Gamma_{p}(G)$, 
\begin{equation*}
V_{1}-V_{1}Q-V_{2}Q-\dots-V_{m}Q-V_{m}.
\end{equation*}
Define 
$$A := \left<V_{1}Q,V_{2}Q,\dots, V_{m}Q \right> \:\:\:\text{ and }\:\:\: B:=A/(V_{1}Q\cap G_{2}).$$
Let $\pi :A\rightarrow B$ be the projection from $A$ to $B$.
By Lemma \ref{lem:final}, $\ker \pi = V_i Q \cap G_2$ for all $i = 1, \ldots, m$. Thus, $\ker \pi$ contains $[V_i Q, V_i Q]$.

Next, we claim that $\pi(V_{i}Q)$ is abelian for all $i = 1, \ldots, m$.
Indeed, if we restrict $\pi$ to $V_{i}Q$, we obtain a surjective homomorphism $f :V_{i}Q\rightarrow \pi(V_{i}Q)$, with
\begin{equation*}
f([V_{i}Q,V_{i}Q])=[\pi(V_{i}Q),\pi(V_{i}Q)].
\end{equation*}
By the previous paragraph, we have $[V_{i}Q,V_{i}Q]$ is in the kernel of $\pi$, and so $\pi(V_i Q)$ is abelian as claimed. 

Let $V_{i}^{'} :=\pi(V_{i}Q)$, then, as shown above, $V_{i}^{'}$ are abelian for all $i = 1 \ldots, m$. Let $\Pi$ be the set of all primes in the prime factorization of $|V_{1}^{'}|$ that are not $p$. Let $H$ be a Hall $\Pi$ subgroup of $V_{1}^{'}$. The group $V_{1}^{'}$ is abelian, so $H$ and $V_{1}^{'}\cap V_{2}^{'}$ are normal in $V_{1}^{'}$. Thus, by \cite[Lemma 6]{BD15}, $[V_{1}^{'}:V_{1}^{'}\cap V_{2}^{'}\cap H]$ is a power of $p$. Since $|H|$ is coprime with $p$ and
$$
[V_1' : V_1' \cap V_2' \cap H] = [V_1' : H][H: V_1' \cap V_2' \cap H],
$$
it follows that $V_{1}'\cap V_{2}'$ contains $H$, which means $V_{2}'$ contains H as a Hall $\Pi$ subgroup. Repeating this process, we get $H$ is a Hall $\Pi$-subgroup for $V_{i}'$, for all $i = 1,\ldots, m$. Thus, $H$ is of index a power of $p$ inside all $V_{i}'$.
In particular, we conclude that $\pi^{-1}(H)$ is $p$-adjacent to both $V_1 Q$ and $V_m Q$.
So we get a path $V_{1}-V_{1}Q-\pi^{-1}(H)-V_{m}Q-V_{m}$ of length 4, as desired.
\end{proof}

We are now ready to prove Theorem \ref{thm:mainB}.

\begin{proof}[Proof of Theorem \ref{thm:mainB}]
Let $G$ be a metabelian group and $p$ a prime number.
For any two vertices $V, W$ in a component of $\Gamma_p(G)$, let $N$ be the normal core of $V \cap W$.
Let $\pi: G \to G/N$ be the quotient map. Note that $G/N$ is a finite group.

The $p$-Sylow subgroup $Q$ of the abelian group, $[G/N,G/N]$, is unique. Hence, since $[G/N,G/N]$ is normal in $G/N$, it follows that $Q$ is normal in $G/N$. Then Theorem \ref{thm:mainA} applies to show that $\pi(V)$ is $p$-connected to $\pi(W)$ via a path of length at most 4 in $\Gamma_p(G/N)$.

\cite[Lemma 8, Part 1]{BD15} now applies to show that $V$ is connected to $W$ in $\Gamma_p(G)$ via a path of length at most 4. We are done since $V$ and $W$ were arbitrary vertices in $\Gamma_p(G)$, and so the connected diameter of $\Gamma_p(G)$ is at most 4.
\end{proof}

\section{Solvable groups: the proof of Theorem \ref{thm:mainC}}
\label{sec:thmC}

We introduce a slightly modified graph, which we call the \emph{$p$-containment graph} of a group $G$. 
This graph has vertices the finite-index subgroups of $G$, and an edge is drawn between two vertices if one is a $p$-power index subgroup of the other.
We set $cd_p(G)$ to be the connected diameter of the $p$-containment graph of $G$.

\begin{lemma} \label{lem:specgeod}
$cd_3(\Sym_4) = 4$.
Moreover, any path in the $3$-containment graph connecting $\left< (1, 2) \right>$ to $\left< (3,4) \right>$ must have two consecutive vertices $\Delta_1$ and $\Delta_2$ such that $(i,j) \in \Delta_1 \cap \Delta_2$ where $i \in \{ 1, 2 \}$ and $j \in \{ 3, 4 \}$.
\end{lemma}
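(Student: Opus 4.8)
My plan is to analyze the $3$-containment graph of $\Sym_4$ essentially by hand, exploiting the fact that the only primes dividing $|\Sym_4| = 24$ are $2$ and $3$. The key observation is that an edge in the $3$-containment graph between $H$ and $K$ with $H < K$ forces $[K:H]$ to be a power of $3$, so $K$ is obtained from $H$ by adding $3$-power index, and in a $2$-group $H$ only the trivial case $H = K$ is possible, while for general $H$ the relevant indices are $1$ or $3$. First I would enumerate, up to conjugacy, the subgroups of $\Sym_4$ together with their orders, and for each pair record whether one contains the other with $3$-power index. The crucial structural point is that the Sylow $3$-subgroups are the order-$3$ subgroups $\langle (a\,b\,c)\rangle$, and a subgroup $H$ can only "move" in the $3$-containment graph by passing through some subgroup of order $3|H|_{3'} \cdot$(something) — concretely, the groups reachable from $\langle(1,2)\rangle$ in one step are $\langle(1,2)\rangle$ itself and the order-$6$ subgroups $\Sym_{\{1,2,c\}} \cong \Sym_3$ containing it (index $3$), since those are the only overgroups of $\langle(1,2)\rangle$ of $3$-power index; similarly for $\langle(3,4)\rangle$.

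The heart of the argument is the "moreover" clause, and this is where I expect the main obstacle to lie: I need to show every path from $\langle(1,2)\rangle$ to $\langle(3,4)\rangle$ passes through two consecutive vertices whose intersection contains a transposition $(i,j)$ with $i\in\{1,2\}$, $j\in\{3,4\}$. The strategy is to track which transpositions of the form $(i,j)$, $i\in\{1,2\}$, $j\in\{3,4\}$ (there are four such: $(1,3),(1,4),(2,3),(2,4)$) are "available" along the path, or rather to argue by a connectivity/separation argument: consider the set $S$ of transpositions $(i,j)$ with $i\in\{1,2\},j\in\{3,4\}$, and show that to get from a subgroup containing $(1,2)$ (hence fixing the partition $\{1,2\}\mid\{3,4\}$ "within" $\Sym_2\times\Sym_2$-type data) to one containing $(3,4)$, some intermediate subgroup must contain an element mixing the blocks $\{1,2\}$ and $\{3,4\}$. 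I would make this precise by assigning to each vertex $\Delta$ along the path a subgroup-theoretic invariant — e.g. whether $\Delta$ is contained in the stabilizer of the partition $\{\{1,2\},\{3,4\}\}$ (a dihedral group of order $8$) versus whether $\Delta$ contains some $(i,j)\in S$ — and showing that a single $3$-containment edge cannot jump from "trivial on $S$" directly to the far side; since $[$edge index$]$ is a power of $3$ and the relevant transpositions generate $2$-subgroups, adding them requires an intermediate step, which is exactly the claimed pair $\Delta_1,\Delta_2$.

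Finally, to assemble $cd_3(\Sym_4) = 4$: the lower bound follows once I exhibit $\langle(1,2)\rangle$ and $\langle(3,4)\rangle$ as two vertices whose $3$-containment distance is exactly $4$ — the path $\langle(1,2)\rangle - \Sym_{\{1,2,3\}} - ? - \Sym_{\{1,3,4\}}-\langle(3,4)\rangle$ type construction gives an upper bound of $4$ for this pair after finding the right middle vertex (a subgroup containing a suitable $3$-cycle and of $3$-power index in each neighbor, e.g. built from $A_4$ or an $\Sym_3$), and no shorter path exists because any length-$3$ path $\langle(1,2)\rangle - \Delta - \langle(3,4)\rangle$ would need $\Delta$ of $3$-power index over both rank-one $2$-groups, which is impossible by order considerations. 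For the upper bound on the whole connected diameter, I would observe that every finite-index (here: every) subgroup of $\Sym_4$ is $3$-connected to a Sylow $3$-subgroup or to $\{e\}$ within distance $2$, and then check the resulting small diameter by the subgroup enumeration already in hand. The genuinely delicate part remains the forced-pair claim, and I would treat it with the block-mixing invariant described above rather than a brute-force path enumeration.
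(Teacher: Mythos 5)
Your overall strategy (hand enumeration of the subgroups of $\Sym_4$ plus a separation argument for the ``moreover'' clause) is a legitimate alternative to the paper's proof, which simply verifies the statement by a \cite{GAP4} computation and lists the geodesics explicitly. However, as written your plan has two concrete errors and leaves the hardest claim unexecuted. First, your description of the geodesic is wrong: since $|\Sym_4|=24=2^3\cdot 3$, an edge in the $3$-containment graph between distinct subgroups is exactly a containment of index $3$, so any vertex adjacent to $\Sym_{\{1,2,3\}}$ must have order $2$ or $18$; order $18$ is impossible, so the middle vertex of the path $\langle(1,2)\rangle-\Sym_{\{1,2,3\}}-?-\Sym_{\{1,3,4\}}-\langle(3,4)\rangle$ is forced to be the transposition subgroup $\langle(1,3)\rangle$, not a ``subgroup containing a suitable $3$-cycle'' as you suggest. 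Second, your route to the global upper bound rests on the assertion that every subgroup is within distance $2$ of a Sylow $3$-subgroup or of $\{e\}$; this is false, because $[\langle(1,2)\rangle:\{e\}]=2$ is not a power of $3$, and indeed the component of $\langle(1,2)\rangle$ (the six transposition subgroups together with the four point stabilizers $\Sym_{\{a,b,c\}}\cong\Sym_3$) contains neither $\{e\}$ nor any $C_3$. The correct bookkeeping is: the components are the star on $\{e\}$ and the four $C_3$'s (diameter $2$), the pair $\{V_4,A_4\}$ (diameter $1$), the star on $\Sym_4$ and the three dihedral Sylow $2$-subgroups (diameter $2$), the bipartite transposition/$\Sym_3$ component (diameter $4$), and isolated vertices; this yields $cd_3(\Sym_4)=4$.

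For the ``moreover'' clause you only offer a strategy, and you correctly identify it as the delicate point; it can be completed along the lines you sketch, but you must actually do it. The clean version: any path from $\langle(1,2)\rangle$ to $\langle(3,4)\rangle$ stays in the bipartite component just described, and deleting the four vertices $\langle(1,3)\rangle,\langle(1,4)\rangle,\langle(2,3)\rangle,\langle(2,4)\rangle$ disconnects $\langle(1,2)\rangle$ from $\langle(3,4)\rangle$ (after deletion, $\Sym_{\{1,2,3\}}$ and $\Sym_{\{1,2,4\}}$ are adjacent only to $\langle(1,2)\rangle$, while $\Sym_{\{1,3,4\}}$ and $\Sym_{\{2,3,4\}}$ are adjacent only to $\langle(3,4)\rangle$). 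Hence every such path passes through some $\langle(i,j)\rangle$ with $i\in\{1,2\}$, $j\in\{3,4\}$, and its neighbor on the path is a $\Sym_3$ containing $(i,j)$, which is exactly the required consecutive pair $\Delta_1,\Delta_2$. With these corrections your argument closes the gap and is arguably more informative than the paper's computer verification, but in its present form it is a proposal with a wrong intermediate vertex, a false structural claim, and an unproved key step.
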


\begin{proof}
We discovered this through \cite{GAP4} computations. Please see Figure \ref{fig:conngraphs4}, the component containing $\left< (1, 2) \right>$ is in the upper left corner. 
The geodesics connecting $\left< (1, 2) \right>$ to $\left< (3,4) \right>$ are of the form
$$
\left< (1, 2) \right>, \left< (1,2) , (1,2,k) \right>, \left< (i,k) \right>, \left< (i,k), (i, j, k) \right>, \left< (3,4) \right>,
$$
where $i \in \{1,2\}$ and $j, k \in \{ 3, 4 \}$.
Removing back-tracking from any path connecting $\left< (1, 2) \right>$ to $\left< (3,4) \right>$ reduces to one of the above geodesics. The lemma follows immediately.
\end{proof}

\begin{figure} \label{fig:conngraphs4}
\begin{center}
\includegraphics[scale=0.5]{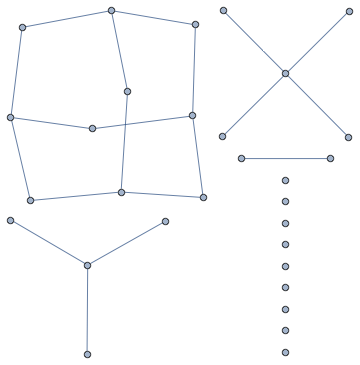}
\caption{The $3$-containment graph for $\Sym_4$. The component containing $\left< (1, 2) \right>$ is in the upper left corner.  The vertices $\left<(1,2)\right>$ and $\left< (3,4) \right>$ are the two vertices inside the square. This image was created with the use of \cite{MATHEMATICA}.}
\end{center}
\end{figure}

We remark that the corresponding statement in Lemma \ref{lem:construction}, below, for connected diameters of $p$-local commensurability graphs is \emph{not} true. This is why we use $p$-containment graphs.

\begin{lemma} \label{lem:construction}
Let $H_1, H_2, H_3$ and $H_4$ be isomorphic copies of a group $H$ and set
$\Delta = H_1 \times H_2 \times H_3 \times H_4$ and 
$$
G =  \Delta \rtimes \Sym_4,
$$
where the action of $\Sym_4$ on $\Delta$ is the permutation action on the coordinates.
Then
$$
cd_3(G) \geq cd_3(H) + 1.
$$
\end{lemma}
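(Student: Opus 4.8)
The plan is to embed a geodesic path in the $3$-containment graph of $H$ into the $3$-containment graph of $G$ in a way that forces any path between two carefully chosen subgroups of $G$ to be at least one longer than the diameter realized inside $H$. Concretely, fix vertices $K_1, K_2$ in a common component of the $3$-containment graph of $H$ realizing a geodesic of length $cd_3(H)$. Inside $G = \Delta \rtimes \Sym_4$, consider the subgroups obtained by placing a copy of this $H$-data in the first coordinate factor $H_1$ and pairing it with the point stabilizers $\langle (1,2)\rangle$ and $\langle (3,4)\rangle$ from Lemma \ref{lem:specgeod}. That is, I would set $\widetilde{K_i} = (K_i \times H_2 \times H_3 \times H_4) \rtimes S_i$ where $S_1 = \langle (1,2)\rangle$ and $S_2 = \langle(3,4)\rangle$, or some mild variant thereof; the point is that the $\Sym_4$-part of these two vertices must themselves be joined by a path in the $3$-containment graph of $\Sym_4$.

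The key structural observation is a projection/retraction argument: the quotient $G \to \Sym_4$ (killing $\Delta$) and the various coordinate projections $\Delta \to H_i$ should be compatible with $3$-containment, so that a path in $\Gamma$-containment of $G$ connecting $\widetilde{K_1}$ to $\widetilde{K_2}$ pushes forward to a path in the $3$-containment graph of $\Sym_4$ connecting $\langle(1,2)\rangle$ to $\langle(3,4)\rangle$. By Lemma \ref{lem:specgeod}, such a path must contain two consecutive vertices $\Delta_1, \Delta_2$ whose intersection contains some transposition $(i,j)$ with $i \in \{1,2\}$, $j \in \{3,4\}$. The element $(i,j)$ swaps coordinate $i$ with coordinate $j$ and fixes the remaining two coordinates; I would use this to argue that at the corresponding stage of the lifted path in $G$, the subgroups being compared are forced to "diagonalize" the $H$-data across coordinates $i$ and $j$ (because they are stable under conjugation by $(i,j)$, up to $3$-power index). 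The upshot should be that one can read off, from the segment of the $G$-path sitting over the segment $\cdots - \Delta_1 - \Delta_2 - \cdots$, an actual path in the $3$-containment graph of $H$ from (a conjugate of) $K_1$ to (a conjugate of) $K_2$: the two "halves" of the $G$-path, living over $\langle(1,2)\rangle$-side and $\langle(3,4)\rangle$-side respectively, record the $H_1$-coordinate and (via the swap) the $H_j$-coordinate of the data, and stitching them at the swap gives an $H$-path. Since any such $H$-path has length at least $cd_3(H)$, and the $G$-path additionally must traverse the nontrivial $\Sym_4$-portion (which contributes at least one extra edge not accounted for by the $H$-path, because $\langle(1,2)\rangle$ and $\langle(3,4)\rangle$ are not $3$-containment adjacent and the endpoints of the extracted $H$-path correspond to distinct coordinate slots), we conclude the $G$-path has length $\geq cd_3(H) + 1$.

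The main obstacle I anticipate is making the "stitching at the swap" step precise: one must show that the $3$-power-index relations between consecutive $G$-subgroups really do descend to $3$-power-index relations between the extracted $H$-subgroups, and that the change of coordinate forced by the transposition $(i,j)$ matches up the endpoints correctly rather than producing two unrelated $H$-subgroups. This requires being careful about which coordinate projection to apply on each side of the swap, and tracking how index is distributed among the factors $H_1, H_2, H_3, H_4$ and the $\Sym_4$-quotient under a single $3$-containment edge (here the multiplicativity of index in towers, as used in Lemmas \ref{lem:padjcap} and \ref{lem:final}, together with coprimality of $3$ with irrelevant factors, should do the bookkeeping). A secondary subtlety is ensuring that $\widetilde{K_1}$ and $\widetilde{K_2}$ genuinely lie in the same component of the $3$-containment graph of $G$ — this should follow by lifting a path witnessing $cd_3(H)$ coordinate-wise and concatenating with lifts of the $\Sym_4$-geodesic from Lemma \ref{lem:specgeod}, but it needs to be checked so that the inequality $cd_3(G) \geq cd_3(H)+1$ is not vacuous.
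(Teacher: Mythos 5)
Your high-level strategy---push a hypothetical short path in $G$ down to $\Sym_4$ via $G\to G/\Delta$ and to $H$ via the coordinate projections, invoke Lemma \ref{lem:specgeod} to force a ``cross'' transposition $(i,j)$ with $i\in\{1,2\}$, $j\in\{3,4\}$, and then splice coordinate paths at the swap---is exactly the paper's strategy. But your choice of endpoints breaks the argument in two ways. First, $(K_i\times H_2\times H_3\times H_4)\rtimes S_i$ is not in general a subgroup: $(1,2)$ swaps the first two coordinates, so it normalizes $K\times H\times H\times H$ only if $K=H$. You need the $\Delta$-part to be $S_i$-invariant, which forces equal entries in the coordinates moved by $S_i$. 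Second, and more fundamentally, you place the $H$-data in the \emph{same} coordinate for both endpoints. Then every coordinate projection $\phi_\ell$ other than $\phi_1$ starts and ends at $H$, and $\phi_1$ starts at $K_1$ and ends at $K_2$; the forced identity $\phi_i(A)=\phi_j(A)$ at the swap vertex identifies a point on a $K_1\!\to\!K_2$ path with a point on an $H\!\to\!H$ path, which yields no contradiction and no shortened $H$-path. Your hedge ``or some mild variant thereof'' is hiding the essential idea, not a cosmetic adjustment.

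The construction that works is antisymmetric: the paper takes $E_1=\langle V_1\times V_1\times V_d\times V_d,(1,2)\rangle$ and $E_2=\langle V_d\times V_d\times V_1\times V_1,(3,4)\rangle$, where $V_1,\dots,V_d$ is a geodesic in the $3$-containment graph of $H$. Now $\phi_i$ (for $i\in\{1,2\}$) runs from $V_1$ to $V_d$ along the hypothetical length-$d$ path, while $\phi_j$ (for $j\in\{3,4\}$) runs from $V_d$ to $V_1$; the swap forces $\phi_i(A_t)=\phi_j(A_t)$ at two consecutive vertices, and concatenating the initial segment of one with the reversed initial segment of the other produces a path from $V_1$ to $V_d$ in $H$ that is strictly shorter than $d$ (equivalently, a repeated vertex in a would-be geodesic). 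That is the entire source of the ``$+1$''; there is no separate accounting of ``one extra edge for the $\Sym_4$ portion,'' and your version of that count is not justified. Your secondary worry about connectivity of the endpoints is legitimate but easily handled: the paper exhibits an explicit path moving one pair of coordinates at a time and walking through $\Sym_4$ via $\langle(1,2),(1,2,3)\rangle$, $\langle(2,3)\rangle$, etc. As written, your proposal does not constitute a proof.
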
 

\begin{proof}
Let $V_1, V_2, \ldots, V_d$ be a geodesic path in the $3$-containment graph of $H$ of length  $d := cd_3(H)$.
Set
$$
E_1 = \left< V_1 \times V_1 \times V_d \times V_d , (1, 2) \right>
$$
and
$$
E_2 = \left< V_d \times V_d \times V_1 \times V_1 , (3,4) \right>.
$$

These two vertices are connected via a path given by the vertices:
\begin{eqnarray*}
E_1 =  \left< V_1 \times V_1 \times V_d \times V_d , (1, 2) \right>, \\
\left< V_{2} \times V_{2} \times V_d \times V_d, (1,2) \right>, \\
\left< V_{3} \times V_{3} \times V_d \times V_d, (1,2) \right>, \\
\vdots \\
\left< V_{d} \times V_{d} \times V_d \times V_d, (1,2) \right>, \\
\left< V_{d} \times V_{d} \times V_d \times V_d, (1,2,3), (1,2) \right> ,\\
\left< V_{d} \times V_{d} \times V_d \times V_d, (2,3) \right>, \\
\left< V_{d} \times V_{d} \times V_d \times V_d, (2,3,4), (2,3)) \right>, \\
\left< V_{d} \times V_{d} \times V_d \times V_d, (3,4) \right>, \\
\left< V_{d} \times V_{d} \times V_{d-1} \times V_{d-1}, (3,4) \right>, \\
\left< V_{d} \times V_{d} \times V_{d-2} \times V_{d-2}, (3,4) \right>, \\
\vdots \\
\left< V_{d} \times V_{d} \times V_1 \times V_1, (3,4) \right>.
\end{eqnarray*}
Thus, they are in the same component. To finish, suppose, for the sake contradiction, that there is a geodesic with endpoints $E_1$ and $E_2$ with length equal to $d$: $E_1 = A_1, A_2, \ldots, A_d = E_2$.
Let $\pi$ be the projection $G \to G/\Delta \cong \Sym_4$ and $\phi_i$ be the maps $G \to H_i$ given by $A \mapsto A \cap \Delta \mapsto \pi_i(A)$, where $\pi_i : \Delta \to H_i$ is the projection map.

Using Lemma \ref{lem:padjcap} and \cite[Lemma 5]{BD15}, it is straightforward to see that $\phi_i(A_i)$ and $\pi(A_i)$ are both paths in the $3$-containment graphs of $H$ and $\Sym_4$, respectively. Moreover, if $\pi(A_i)$ contains $(j,k)$, then $\phi_j(A_i) = \phi_k(A_i)$ must be equal.
By Lemma \ref{lem:specgeod}, there must be $i$ where $\pi(A_i)$ and $\pi(A_{i+1})$ contains some $(j,k)$ where $j \in \{ 1, 2 \}$ and $k \in \{ 3, 4 \}$, and hence for some $\ell$, $\phi_\ell(A_1), \ldots, \phi_\ell(A_d)$ has a repeated vertex, which is impossible as this path must be a geodesic of length $d$.
\end{proof}

For a group $H$, we denote the group constructed in the above lemma by $BS(H)$.

\begin{lemma} \label{lem:cd}
Let $n$ be a natural number.
If $cd_3(G) > 2n$, then there exists a component of the $3$-local commensurability graph of $G$ with graph diameter at least $n$.
\end{lemma}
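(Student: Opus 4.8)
The plan is to relate the $p$-containment graph to the $p$-local commensurability graph by observing that every edge in the $p$-containment graph gives rise to an edge in the $p$-local commensurability graph: if $A$ is a $p$-power-index subgroup of $B$, then $A \cap B = A$, so $[A : A \cap B][B : A\cap B] = [B:A]$ is a power of $p$, hence $A$ and $B$ are $p$-adjacent. Consequently any path of length $d$ in the $3$-containment graph of $G$ is also a path of length $d$ in $\Gamma_3(G)$. So if $cd_3(G) > 2n$, there are two vertices $U, W$ joined by a geodesic of length exceeding $2n$ in the $3$-containment graph, and in particular $U$ and $W$ lie in the same component of $\Gamma_3(G)$.

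Next I would argue that $U$ and $W$ are far apart in $\Gamma_3(G)$ as well, which is where the real content lies. The key point is to convert a short $\Gamma_3(G)$-path into a not-too-much-longer $3$-containment path. Suppose $U = W_0 - W_1 - \cdots - W_k = W$ is a path in $\Gamma_3(G)$. For each edge $W_{j-1} - W_j$ we have that $[W_{j-1}:W_{j-1}\cap W_j]$ and $[W_j : W_{j-1}\cap W_j]$ are both powers of $p$, so $W_{j-1} \cap W_j$ is a $p$-power-index subgroup of each of $W_{j-1}$ and $W_j$; that is, in the $3$-containment graph we have the two-edge path $W_{j-1} - (W_{j-1}\cap W_j) - W_j$. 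Splicing these together replaces the $\Gamma_3(G)$-path of length $k$ by a $3$-containment path of length at most $2k$ between the same endpoints. Therefore $d_{\Gamma_3(G)}(U,W) \geq \tfrac12\, d_{\text{cont}}(U,W) > \tfrac12 (2n) = n$, so the component of $\Gamma_3(G)$ containing $U$ and $W$ has diameter at least $n$.

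The main obstacle to watch is getting the length bookkeeping exactly right, given the paper's convention that a path written with $m$ intermediate vertices has "length $m+2$." Under that convention a single edge has length $2$, and the doubling argument above should be phrased carefully: a $\Gamma_3(G)$-path realizing a distance $k$ (in the sense of number of edges) yields a $3$-containment path with at most $2k$ edges, and the hypothesis $cd_3(G) > 2n$ must be read in the matching units. I would simply present both directions in terms of number of edges to avoid confusion, then translate back at the end. A secondary point to check is that the intermediate groups $W_{j-1}\cap W_j$ are indeed finite-index subgroups of $G$, hence legitimate vertices — this is immediate since they are intersections of two finite-index subgroups. No use of solvability or of the specific structure of $G$ is needed for this lemma; it is a purely graph-theoretic comparison.
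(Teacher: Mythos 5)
Your proposal is correct and follows essentially the same argument as the paper: both pass from a containment-geodesic to the commensurability graph, then convert a commensurability path $V_1,\dots,V_k$ into a containment path $V_1, V_1\cap V_2, V_2,\dots,V_k$ of roughly twice the length to force $k\geq n$. Your extra care with the edge-versus-vertex counting convention is a welcome refinement but does not change the substance.
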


\begin{proof}
Let $L$ be a geodesic in the $3$-containment graph of length $2n$ with endpoints $E_1$ and $E_2$.
It is straightforward to see that $L$ is in a connected component of the $3$-local commensurability graph. Let $J$ be a geodesic path in the $3$-local commensurability graph given by the vertices
$$
E_1 = V_1, V_2, \ldots, V_k = E_2.
$$
Then the vertices
$$
E_1 = V_1, V_1 \cap V_2, V_2, V_2 \cap V_3, V_3, \ldots , V_k = E_2.
$$
give a path of length $2k$ in the $3$-containment graph.
We must have, then, that $2k \geq 2n$, giving $k \geq n$.
So we have found a component of the $3$-local graph of graph diameter at least $n$.
\end{proof}

We are now ready to complete the proof.

\begin{proof}[Proof of Theorem \ref{thm:mainC}]
Let $n$ a natural number be given.
Consider the group
$$
BS(BS(BS( \cdots (BS(\Sym_4)\cdots),
$$
where the number of $BS$ applications is greater than $2n$.
By Lemmas \ref{lem:construction}, $cd_3$ of the resulting group is greater than $2n$.
Since $Sym_4$ is solvable, and any solvable-by-solvable group is solvable, it follows that the above group is solvable.
Hence, by Lemma \ref{lem:cd}, the proof is complete.
\end{proof}

\begin{proof}[Proof of Corollary \ref{cor:main}]
Let $N > 0$ be given.
By Theorem \ref{thm:mainC}, there exists a finite group $Q$ of with a component $\Omega$ of the $3$-local commensurability graph with graph diameter greater than $N$.
Any free group of rank $2$ contains, as a normal subgroup, a free group of rank equal to $|Q|$. Call this subgroup $\Delta$.
Then $\Delta$ maps onto $Q$, so we are done by \cite[Lemma 8]{BD15}.
\end{proof}

\appendix
\section{Some examples}

Here will give a complete classification of the components of $p$-local commensurability graphs of  the subgroup of upper triangular matrices, $P(2,q)$ of GL(2, $\mathbb{F}_{q}$), where $q$ is some prime. By Theorem \ref{thm:mainA} in this paper, the connected diameter of this graph is at most 4. Through our analysis, we will see that the connected diameter is at most 2 (and this is sharp).

Before stating our classification of components, we need a definition. Let $H$ be component of graph $\Gamma$. We say $H$ is a \emph{star graph} if every edge in $H$ is incident with a fixed vertex $v_c$ in $H$, and these are the only edges in $H$.
We call $v_c$ the \emph{ center} of $H$, and it is unique. With this definition in hand, we can now state the main result of this appendix:

\begin{theorem}
\label{thm:compclassp}
Let $p, q$ be prime numbers.
The components of $\Gamma_{p}(P(2,q))$ are complete or star.
\end{theorem}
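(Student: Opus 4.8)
The plan is to compute the group $P(2,q)$ explicitly and then analyze its finite-index subgroups directly. Write elements of $P(2,q)$ as pairs $(a,b;c)$ representing $\begin{pmatrix} a & b \\ 0 & c \end{pmatrix}$ with $a,c \in \mathbb{F}_q^\times$ and $b \in \mathbb{F}_q$. The subgroup $U$ of unipotent matrices ($a=c=1$) is normal, isomorphic to $\mathbb{F}_q \cong (\Z/q)$, and $P(2,q)/U \cong \mathbb{F}_q^\times \times \mathbb{F}_q^\times$ is abelian of order $(q-1)^2$. So $P(2,q)$ has order $q(q-1)^2$, and $[P(2,q), P(2,q)] = U$ (one checks commutators generate $U$ and nothing more since the quotient is abelian). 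Thus the unique $p$-Sylow of the derived subgroup is either $U$ itself (when $p=q$) or trivial (when $p \ne q$), which already tells us Theorem \ref{thm:mainA} applies and the connected diameter is at most $4$; the task is to sharpen this to the star/complete dichotomy.

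I would split into two cases. \textbf{Case $p \ne q$:} here $U$ is a $p'$-group. Given a $p$-edge between $A$ and $B$, the index $[A:A\cap B]$ is a $p$-power, but $A \cap B$ contains a common Hall $p'$-part; using the structure of $A$ — which has a normal $p'$-Hall subgroup (the intersection with $U$ times the $p'$-part of its image in the abelian quotient) and a $p$-complement — one shows that being $p$-adjacent forces $A$ and $B$ to share their entire $p'$-part, so $A$ and $B$ differ only inside a fixed $p$-subgroup of the abelian quotient $\mathbb{F}_q^\times \times \mathbb{F}_q^\times$. Two subgroups of a fixed cyclic-by-cyclic abelian $p$-group that are "close" are linearly ordered when the $p$-part of $q-1$ is cyclic, giving completeness; when the $p$-part is not cyclic (i.e.\ $\Z/p \times \Z/p$-type obstruction occurs, which happens only in degenerate small cases) one gets a star. \textbf{Case $p = q$:} now $U$ is the whole derived subgroup and is a normal $p$-subgroup. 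By Lemma \ref{lem:VQandWQ}, every vertex $V$ is $p$-adjacent to $VU$, and $VU/U$ is a subgroup of the $p'$-group $\mathbb{F}_q^\times \times \mathbb{F}_q^\times$; since a $p$-local commensurability graph of a $p'$-group is totally disconnected (Proposition \ref{prop:totaldisc}), $VU$ is determined within each component. So every component has a unique "top" vertex $VU$, and all other vertices in the component are $p$-adjacent to it — this is exactly the star condition unless there are further edges, in which case one checks the subgroups below $VU$ form a chain or an antichain, yielding complete or star.

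The key computation throughout is: given the top group $M = VU$ (a group of the form (subgroup of $\mathbb{F}_q^\times \times \mathbb{F}_q^\times$) times $U$, pulled back), classify the subgroups $W \le M$ with $[M:W]$ a $p$-power and $W U = M$. These are complements-like subgroups, and the relations among them (whether $[W : W \cap W']$ is always a $p$-power, and whether such $W$ are totally ordered) is a finite combinatorial problem about the lattice of subgroups of $U \cong \Z/q$ together with possible "twisting" by the torus action. I expect the main obstacle to be a clean case analysis showing precisely when a component fails to be complete — i.e.\ pinning down when two $p$-power-index subgroups $W, W'$ of a common $M$ are $p$-adjacent to $M$ but not to each other, and verifying that in every such case all the "extra" subgroups are pairwise non-adjacent so the component is genuinely a star rather than something more complicated. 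This amounts to understanding the action of the diagonal torus on the subgroup lattice of $U$ and checking that no intermediate configuration (a path of length $3$ that is not a star and not complete) can arise; since $U$ is cyclic of prime order its only subgroups are $1$ and $U$, which should make this tractable, but it requires care to handle all $(p,q)$ simultaneously.
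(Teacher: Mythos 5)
Your skeleton matches the paper's (explicit identification of $P(2,q)$ as $\mathbb{Z}/q \rtimes (\mathbb{Z}/(q-1)\times\mathbb{Z}/(q-1))$, then a case split on $p=q$ versus $p \neq q$), but there are two genuine problems. First, your dichotomy in the case $p \neq q$ is wrong: you claim completeness when the $p$-part of $q-1$ is cyclic and a star otherwise. In fact, whenever $p \mid (q-1)^2$ every component is complete, with no cyclicity hypothesis; star components occur only when $p = q$. Completeness among the subgroups containing $U$ follows because they correspond to subgroups of the abelian quotient, and $p$-adjacency is transitive on subgroups of an abelian group (\cite[Theorem 1]{BD15}) --- no linear ordering is needed, and a star with three or more leaves is never complete, so your "star" branch cannot occur here. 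Second, and more seriously, your proposal never engages with the subgroups that do \emph{not} contain $U$, namely the conjugates $b^{n}s_{i}b^{-n}$ of subgroups of the diagonal torus by unipotent elements. Classifying all subgroups of $P(2,q)$ into the two families $\left<U, s_i\right>$ and $\left<b^{n}s_{i}b^{-n}\right>$ (Proposition \ref{prop:subgroupclass}) and then deciding when two differently conjugated tori are $p$-adjacent is the bulk of the work; the paper needs a separate transitivity argument for these (Lemma \ref{lem:claim}, on normal complements of $p$-Sylow subgroups in nilpotent subgroups) to get completeness. Your framing of the "key computation" as a combinatorial problem about the subgroup lattice of $U \cong \mathbb{Z}/q$ points at the wrong lattice --- $U$ has prime order, so there is nothing there; the combinatorics lives in the torus and its $q$ unipotent conjugates.

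For $p = q$ your outline (every $V$ is adjacent to $VU$, and $VU$ is constant on a component by Proposition \ref{prop:totaldisc}) is the right start, but "one checks the subgroups below $VU$ form a chain or an antichain, yielding complete or star" is precisely the assertion that must be proved, not a routine check. The paper's argument is: two distinct conjugated tori are never adjacent, because $q$ does not divide their orders, so a nontrivial $q$-power index between them is impossible and adjacency would force equality; distinct groups $\left<U,s_i\right>$, $\left<U,s_j\right>$ are never adjacent, because their adjacency is equivalent to that of $s_i$ and $s_j$ inside the $q'$-group $\mathbb{Z}/(q-1)\times\mathbb{Z}/(q-1)$; and each conjugated torus is adjacent to a unique group of the form $\left<U,s_j\right>$. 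Without these three verifications the star conclusion does not follow from the existence of a top vertex.
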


\noindent

 We devote the rest of this section to proving Theorem \ref{thm:compclassp}.
We begin with primes $p$ that do not divide the order of the group.
 
\begin{proposition} \label{prop:totaldisc}
Let $G$ be a finite group.
Then for any prime $p$, $\Gamma_{p}(G)$ is totally disconnected if and only if $p\nmid |G|$.
 \end{proposition}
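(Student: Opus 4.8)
The plan is to prove both directions of the biconditional, with the reverse direction ($p \nmid |G| \implies \Gamma_p(G)$ totally disconnected) being essentially immediate and the forward direction ($p \mid |G| \implies \Gamma_p(G)$ has an edge) requiring an explicit construction of a $p$-power-index pair of subgroups.

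For the reverse direction, suppose $p \nmid |G|$. Every finite-index (hence finite) subgroup $H \leq G$ has order dividing $|G|$, so $|H|$ is coprime to $p$. If $A$ and $B$ are adjacent in $\Gamma_p(G)$, then $[A : A\cap B][B:A\cap B]$ is a power of $p$; but this quantity also divides $|A|\cdot|B|$ (indeed $[A:A\cap B]$ divides $|A|$ and $[B:A\cap B]$ divides $|B|$), which is coprime to $p$. Hence the only power of $p$ it can be is $p^0 = 1$, forcing $[A:A\cap B] = [B:A\cap B] = 1$, i.e. $A = B$. So there are no edges between distinct vertices and $\Gamma_p(G)$ is totally disconnected.

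For the forward direction, suppose $p \mid |G|$. By Cauchy's theorem there is an element $g \in G$ of order $p$, hence a subgroup $C = \langle g \rangle$ of order $p$. The plan is to exhibit two distinct adjacent vertices: I would take $A = G$ itself (a finite-index subgroup — index $1$) and $B$ any subgroup of $G$ of index exactly $p$ if one exists; more robustly, since we need this to work for arbitrary finite $G$ with $p \mid |G|$, I would instead compare a maximal subgroup. Actually the cleanest choice: let $H$ be a Sylow $p$-subgroup of $G$, write $|H| = p^k$ with $k \geq 1$, and inside $H$ take a subgroup $H'$ of index $p$ (which exists since finite $p$-groups have subgroups of every order dividing their order). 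Then $H$ and $H'$ are both finite-index subgroups of $G$, they are distinct, and $[H : H \cap H'][H' : H \cap H'] = [H:H'] \cdot 1 = p$, a power of $p$. Hence $H$ and $H'$ are $p$-adjacent, so $\Gamma_p(G)$ is not totally disconnected.

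The proof has essentially no hard part — the only thing to be careful about is invoking the right elementary facts (index divides order; finite $p$-groups possess subgroups of every permissible order; or simply Cauchy's theorem to get a subgroup of order $p$ and pair it with the trivial subgroup, since $[\,\langle g\rangle : 1\,]\cdot[1:1] = p$). I would present the argument using $\langle g \rangle$ and the trivial subgroup $1$, since that avoids even needing structure theory of $p$-groups: Cauchy gives $g$ of order $p$, the subgroups $\langle g \rangle$ and $\{1\}$ are distinct finite-index subgroups, and their commensurability index is exactly $p$, establishing an edge. This completes both directions.
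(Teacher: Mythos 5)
Your proof is correct and takes essentially the same approach as the paper: the paper's forward direction pairs a nontrivial $p$-Sylow subgroup with the trivial subgroup, while you pair a Cauchy subgroup of order $p$ with the trivial subgroup, and the reverse direction in both cases is the observation that a nontrivial $p$-power index is incompatible with $p\nmid |G|$. No substantive difference.
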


\begin{proof} 
Set $N = |G|$.
If $p |N$ then there exists a nontrivial $p$-Sylow subgroup which is $p$-adjacent to the trivial group.
Now, if $p\nmid N$. Suppose, for the sake of a contradiction, that there exist $A < B \leq G$ such that $A$ is of index a power of $p$ in $B$. Then $p$ divides $|G|$, which is impossible. It follows that $\Gamma_p(G)$ is totally disconnected.
\end{proof}

Note that by Proposition \ref{prop:totaldisc}, if $p$ does not appear in the prime factorization of order of $G$, then the $p$-commensurability graphs are totally disconnected. Thus, for the remainder, it suffices to handle the case when $p$ divides the order of $P(2, q)$.

\begin{center}
\begin{table}[ht]
\centering
\begin{tabular}{c  c | c c}
\includegraphics[scale=0.5]{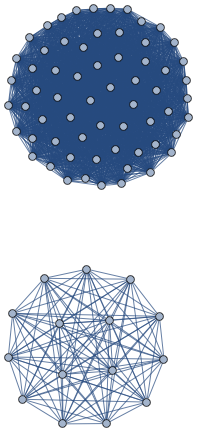} & & & \includegraphics[scale=0.5]{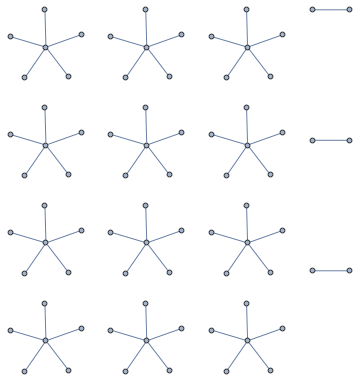} 
\end{tabular}
\caption{The two graphs above are $\Gamma_{2}(P(2,5))$ and $\Gamma_{5}(P(2,5))$, respectively. For $p=2$ we have two complete components, and for $p=5$ we have twelve star graphs. This image was created with the use of \cite{MATHEMATICA}.}
\end{table}
\end{center}

We will identify $P(2,q)$ with the semidirect product $\mathbb{Z}/{q}\rtimes (\mathbb{Z}/(q-1)\times \mathbb{Z}/(q-1))$, where $\mathbb{Z}/q$ corresponds to the subgroup $\left< \begin{pmatrix} 1 & 1  \\ 0 & 1 \end{pmatrix} \right>$ and $\mathbb{Z}/(q-1) \times \mathbb{Z}/(q-1)$ corresponds to diagonal matrices in $\GL(n, \mathbb{F}_q)$.

\begin{proposition} \label{prop:subgroupclass}
All the subgroups of $P(2,q) \cong \mathbb{Z}/{q}\rtimes (\mathbb{Z}/(q-1)\times \mathbb{Z}/(q-1))$ are the form of $\left<\mathbb{Z}/{q},s_{i}\right>$ and $\left<b^{n}s_{i}b^{-n}\right>$, where $\{ s_{j} \}$ enumerates all subgroups of $\mathbb{Z}/(q-1)\times \mathbb{Z}/(q-1)$, and
 $b$ is the generator of $\mathbb{Z}/{q}$, $n\in \mathbb{Z}/{q}$.
 \end{proposition}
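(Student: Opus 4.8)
The plan is to classify all subgroups of $P := P(2,q) \cong \mathbb{Z}/q \rtimes (\mathbb{Z}/(q-1) \times \mathbb{Z}/(q-1))$ by splitting on whether or not a subgroup $K \leq P$ contains the normal subgroup $\mathbb{Z}/q = \langle b \rangle$. Write $N = \mathbb{Z}/q$, which is normal in $P$, and let $T \cong \mathbb{Z}/(q-1) \times \mathbb{Z}/(q-1)$ be the complement consisting of diagonal matrices. Note $|N| = q$ is prime and $\gcd(q, q-1) = 1$, so $N$ is the unique $q$-Sylow subgroup of $P$ and $T$ is a Hall $q'$-subgroup. First I would dispose of the case $N \leq K$: then $K = N \rtimes (K \cap T)$ (since $K \supseteq N$ forces $K = N(K\cap T)$ by a standard Dedekind-law argument, as $N \triangleleft K$ and $T$ is a complement), and $K\cap T$ ranges over all subgroups $s_j$ of $T$, giving exactly the subgroups $\langle N, s_j\rangle = \langle \mathbb{Z}/q, s_j\rangle$.

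The substantive case is $N \not\leq K$. Here I would argue that $K \cap N = 1$: indeed $K \cap N$ is a subgroup of the prime-order group $N$, so it is either $1$ or all of $N$, and the latter is excluded. Consequently $|K|$ is coprime to $q$, so $K$ is a $q'$-subgroup of $P$. Now I invoke the Schur–Zassenhaus / Hall theory (or more elementarily, the conjugacy of complements): since $P = N \rtimes T$ with $\gcd(|N|,|T|)=1$, every $q'$-subgroup of $P$ is contained in some conjugate of the Hall $q'$-subgroup $T$. The conjugates of $T$ in $P$ are exactly $b^n T b^{-n}$ for $n \in \mathbb{Z}/q$ (conjugation by elements of $T$ fixes $T$, so the conjugates are parametrized by $P/N_P(T)$, and one checks $N_P(T) = T$). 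Hence $K \leq b^n T b^{-n}$ for some $n$, i.e. $b^{-n} K b^n$ is a subgroup $s_i$ of $T$, so $K = b^n s_i b^{-n}$. This gives precisely the subgroups $\langle b^n s_i b^{-n} \rangle$ claimed in the statement (reading $s_i$ as varying over all subgroups of $T$, so the cyclic-generator notation $\langle \cdot \rangle$ in the proposition should be understood as "the subgroup $b^n s_i b^{-n}$").

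The key steps, in order: (1) identify $N = \mathbb{Z}/q$ as the unique $q$-Sylow and $T$ as a Hall $q'$-complement, using $\gcd(q,q-1)=1$; (2) handle $N \leq K$ via the Dedekind modular law to get $K = N\rtimes(K\cap T)$; (3) for $N \not\leq K$, use primality of $q$ to force $K \cap N = 1$, hence $K$ is a $q'$-group; (4) apply conjugacy of Hall complements to place $K$ inside a conjugate $b^n T b^{-n}$; (5) verify $N_P(T) = T$ so that the distinct conjugates are exactly $\{b^n T b^{-n} : n \in \mathbb{Z}/q\}$. I expect the main obstacle to be step (4): making the appeal to conjugacy of complements fully rigorous and, relatedly, checking $N_P(T) = T$ in step (5) — concretely, one must verify that no nontrivial power $b^n$ normalizes the diagonal torus $T$, which amounts to the matrix computation that $b^n t b^{-n}$ is not diagonal unless $t$ is central or $n = 0$. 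A secondary subtlety is the somewhat loose notation in the proposition statement (the $\langle - \rangle$ brackets and the phrase "$b$ is the generator of $\mathbb{Z}/q$, $n \in \mathbb{Z}/q$"): I would make explicit at the outset that $\{s_j\}$ enumerates \emph{all} subgroups of $T$ and that $b^n s_i b^{-n}$ denotes the conjugate subgroup, not a cyclic group generated by a single element, so that the two families exhaust $\mathrm{Sub}(P)$ without ambiguity.
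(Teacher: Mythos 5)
Your proposal is correct and takes essentially the same route as the paper: both rest on $\mathbb{Z}/q$ being the unique normal $q$-Sylow subgroup and on Hall's theorem (conjugacy of Hall $q'$-subgroups in the solvable group $P(2,q)$) to place the $q'$-part of an arbitrary subgroup inside some conjugate $b^{n}Tb^{-n}$ of the diagonal torus. Your explicit dichotomy $N\leq K$ versus $K\cap N=1$, with the Dedekind law in the first case, is just a cleaner packaging of the paper's Sylow-system argument; note also that verifying $N_{P}(T)=T$ is not actually needed, since writing any $\gamma\in P$ as $\gamma=b^{n}t$ with $t\in T$ already shows that every conjugate of $T$ equals some $b^{n}Tb^{-n}$.
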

\begin{proof}
Let $G=\mathbb{Z}/{q}\rtimes (\mathbb{Z}/(q-1)\times \mathbb{Z}/(q-1))$ and $U$ be a subgroup of $G$. 
Also, let $\mathbb{Z}/{q}=\left<b\right>$, $b$ be the generator of $\mathbb{Z}/{q}$.
Let $P$ be the $q$-Sylow subgroup of $U$.
Then $\mathbb{Z}/{q}$ is the unique $q$-Sylow subgroup of $\mathbb{Z}/{q}\rtimes(\mathbb{Z}/(q-1)\times \mathbb{Z}/(q-1))$.
So either $P = \mathbb{Z}/{q}$ or $P = \{ e \}$.

Then by Hall's theorem, we have a Sylow system for $U$, 
$$S_{p_{1}},S_{p_{2}},\dots,S_{p_{k}},$$ 
$|U|=p_{1}^{l_{1}}p_{2}^{l_{2}}\dots p_{k}^{l_{k}}$, where $S_{p_{i}}S_{p_{j}}=S_{p_{j}}S_{p_{i}}$. 
Let $p_{1}=p$, so $S_{p}=P$. Consider $S_{p_{2}}S_{p_{3}}\dots S_{p_{k}}$, then $|S_{p_{2}}S_{p_{3}}\dots S_{p_{k}}|=\frac{|U|}{|P|}$. That is $|S_{p_{2}}S_{p_{3}}\dots S_{p_{k}}|$ is not divisible by $p$. Consider, $\Delta=\mathbb{Z}/(q-1)\times \mathbb{Z}/(q-1)$, then $\Delta$ is a Hall-subgroup. Then again by Hall's Theorem, there exist $\gamma\in G$, such that 
\begin{equation*}
\gamma S_{p_{2}}S_{p_{3}}\dots S_{p_{k}} \gamma^{-1}<\Delta.
\end{equation*}
So we have $U=\left<P, \gamma S(i) \gamma^{-1}\right>$, where $S(i)=S_{p_{2}}S_{p_{3}}\dots S_{p_{k}}$ and we index all the subgroups of $\mathbb{Z}/(q-1)\times \mathbb{Z}/(q-1)$ by $i\in I$ some index set.

Since $P$ is either the identity or the entire $\mathbb{Z}/{q}$, and $\mathbb{Z}/{q}$ is normal in $G$, then the result follows.
\end{proof}

Before we proceed, we need a quick technical result.

\begin{lemma} \label{lem:claim}
Let $\Delta_{1}$, $\Delta_{2}$ be subgroup of some finite group $\Gamma$ such that $\Delta_{1}$ and $\Delta_{2}$ are both nilpotent and $\Delta_{1}$ is $p$-adjacent to $\Delta_{2}$. Then $\Delta=\Delta_{1}\cap\Delta_{2}$ contains the normal complement of the $p$-Sylow subgroup of $\Delta_{1}$
\end{lemma}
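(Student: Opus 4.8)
The plan is to use the structure theory of nilpotent groups: every nilpotent finite group is the direct product of its Sylow subgroups. Write $\Delta_1 = P_1 \times N_1$, where $P_1$ is the (unique, hence normal) $p$-Sylow subgroup of $\Delta_1$ and $N_1$ is the normal $p$-complement, i.e. the product of all the other Sylow subgroups of $\Delta_1$. The claim is then that $N_1 \leq \Delta_1 \cap \Delta_2$, equivalently that $N_1 \leq \Delta_2$.

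First I would observe that since $\Delta_1 = P_1 \times N_1$, the subgroup $N_1$ is normal in $\Delta_1$, and moreover $\Delta_1 \cap \Delta_2$ is normal in $\Delta_1$ only if we are lucky — so instead I would work with $N_1$ directly. Since $N_1$ is a Hall subgroup of $\Delta_1$ (its order is the $p'$-part of $|\Delta_1|$) and it is normal in $\Delta_1$, and $\Delta_1 \cap \Delta_2$ is a subgroup of $\Delta_1$, I would apply \cite[Lemma 6]{BD15} (the same tool used in the proof of Theorem~\ref{thm:mainA}) with the two subgroups $N_1$ and $\Delta_1 \cap \Delta_2$ of $\Delta_1$: since $N_1 \triangleleft \Delta_1$, we get that $[\Delta_1 : N_1 \cap (\Delta_1 \cap \Delta_2)]$ divides a product of $p$-powers and $[\Delta_1 : \Delta_1 \cap \Delta_2]$. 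The hypothesis that $\Delta_1$ is $p$-adjacent to $\Delta_2$ gives that $[\Delta_1 : \Delta_1 \cap \Delta_2]$ is a power of $p$. Hence $[\Delta_1 : N_1 \cap \Delta_1 \cap \Delta_2] = [\Delta_1 : N_1 \cap \Delta_2]$ is a power of $p$.

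Now I would factor this index as
$$
[\Delta_1 : N_1 \cap \Delta_2] = [\Delta_1 : N_1]\,[N_1 : N_1 \cap \Delta_2].
$$
The first factor $[\Delta_1 : N_1] = |P_1|$ is a power of $p$, and the whole product is a power of $p$, so $[N_1 : N_1 \cap \Delta_2]$ is a power of $p$. But $[N_1 : N_1 \cap \Delta_2]$ divides $|N_1|$, which is coprime to $p$ by construction of $N_1$ as the $p$-complement. Therefore $[N_1 : N_1 \cap \Delta_2] = 1$, i.e. $N_1 \leq \Delta_2$, hence $N_1 \leq \Delta_1 \cap \Delta_2 = \Delta$, which is exactly the assertion.

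The main obstacle is making sure the hypotheses of \cite[Lemma 6]{BD15} are correctly met — specifically, that lemma (as used in the Theorem~\ref{thm:mainA} proof) requires both subgroups involved to be normal in the ambient group, so one must check that $\Delta_1 \cap \Delta_2$ need not be assumed normal, or else replace the argument with a direct computation. If normality of the intersection is genuinely needed, the fix is to instead argue pointwise: the image of $N_1$ in $\Delta_1 / (\Delta_1 \cap \Delta_2)$ is a $p'$-group (being a quotient of $N_1$), but $[\Delta_1 : \Delta_1 \cap \Delta_2]$ being a $p$-power forces $\Delta_1 / (\Delta_1 \cap \Delta_2)$ to be, loosely, controlled — more carefully, consider the natural surjection $\Delta_1 = P_1 \times N_1 \to N_1$ with kernel $P_1$; under this map $\Delta_1 \cap \Delta_2$ maps onto a subgroup $\bar K \leq N_1$ with $[N_1 : \bar K]$ dividing $[\Delta_1 : (\Delta_1\cap \Delta_2)P_1]$, a $p$-power, and also dividing $|N_1|$, a $p'$-number, hence $\bar K = N_1$; combined with $N_1 \cap (\Delta_1 \cap \Delta_2)$ also having $p$-power index in $N_1$ one concludes $N_1 \subseteq \Delta_1 \cap \Delta_2$. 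Either route closes the argument; the rest is routine Sylow/Hall bookkeeping.
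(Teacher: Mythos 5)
Your argument is correct and rests on the same idea as the paper's: pass to the normal $p$-complement $N_1$ of $\Delta_1$ and observe that $[N_1 : N_1 \cap \Delta_2] = [N_1(\Delta_1\cap\Delta_2) : \Delta_1\cap\Delta_2]$ must divide both the $p$-power $[\Delta_1 : \Delta_1\cap\Delta_2]$ and the $p'$-number $|N_1|$, hence equals $1$. The paper phrases this as a contradiction via the factorization $[\Delta_1 : \Delta_1\cap\Delta_2] = [\Delta_1 : N(\Delta_1\cap\Delta_2)][N(\Delta_1\cap\Delta_2) : \Delta_1\cap\Delta_2]$, but your direct version (which, as you note, never actually needs nilpotency of $\Delta_2$ or the full strength of the cited lemma) is the same computation.
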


\begin{proof}
Let $\Delta_{1}$, $\Delta_{2}$ by two subgroup of a finite group and they both are nilpotent.
Since $\Delta_1$ is nilpotent, the normal complement of the $p$-Sylow subgroup of $\Delta_1$ exists, call it $N$.
By Hall's Theorem, the normal complement in $\Delta_1 \cap \Delta_2$ of the $p$-Sylow subgroup of $\Delta_1 \cap \Delta_2$ is conjugate to a subgroup of $N$, and hence is a subgroup of $N$.
Since elements from different $p$-Sylow subgroups commute with one another, we have that $N \Delta_1 \cap \Delta_2$ is a proper subgroup of $\Delta_1$ containing $\Delta_1 \cap \Delta_2$ as a subgroup of index \emph{not} a power of $p$.
Hence, it is impossible for $[\Delta_1 : \Delta_1 \cap \Delta_2] = [\Delta_1 : N (\Delta_1 \cap \Delta_2)][N (\Delta_1 \cap \Delta_2) : \Delta_1 \cap \Delta_2]$ to be a power of $p$.
\end{proof}

\begin{lemma} \label{lem:completecomp}
 Let $p$ be some prime. If $p\mid |\mathbb{Z}/(q-1)\times \mathbb{Z}/(q-1)|,$ then each component of $\Gamma_{p}(P(2,q))$ is complete.
 \end{lemma}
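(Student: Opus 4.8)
The plan is to exploit the subgroup classification of Proposition~\ref{prop:subgroupclass}, which sorts the vertices of $\Gamma_{p}(P(2,q))$ into two families: the \emph{large} subgroups $\langle \mathbb{Z}/q, s\rangle$ containing the unipotent $q$-Sylow $\mathbb{Z}/q$, and the \emph{small} subgroups $\langle b^{n}sb^{-n}\rangle$, which are conjugates of subgroups of the abelian complement $\Delta=\mathbb{Z}/(q-1)\times\mathbb{Z}/(q-1)$. First I would record the relevant numerics: the hypothesis $p\mid|\mathbb{Z}/(q-1)\times\mathbb{Z}/(q-1)|$ forces $p\mid q-1$, hence $p\neq q$; every large subgroup has order divisible by $q$, while every small subgroup has order dividing $(q-1)^{2}$, hence coprime to $q$.

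The first step is to show a large subgroup and a small subgroup are never $p$-adjacent: if $A$ is large and $B$ is small, then $A\cap B\le B$ has order coprime to $q$, so $q$ divides $[A:A\cap B]$, which therefore cannot be a power of $p$. Consequently each component of $\Gamma_{p}(P(2,q))$ lies entirely among large subgroups or entirely among small ones, and it suffices to show completeness within each family.

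For two small subgroups $A=b^{m}sb^{-m}$ and $B=b^{n}tb^{-n}$, both are conjugates of subgroups of $\Delta$ and hence abelian, in particular nilpotent, so Lemma~\ref{lem:claim} applies to the pair $A,B$. Applying it in both directions, I would deduce that $A$ is $p$-adjacent to $B$ if and only if $A$ and $B$ have the same Hall $p'$-subgroup $K$ (the complement of the $p$-Sylow subgroup in the abelian group): the forward direction comes from $K=A_{p'}\le A\cap B\le B$ together with the uniqueness of the Hall $p'$-subgroup of the nilpotent group $B$, and symmetrically; the converse is immediate since $K\le A\cap B$ makes $[A:A\cap B]$ and $[B:A\cap B]$ divide $|A_{p}|$ and $|B_{p}|$, respectively. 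Equality of Hall $p'$-subgroups is an equivalence relation whose classes are cliques, so each component consisting of small subgroups is complete. For two large subgroups $A=\langle\mathbb{Z}/q,s\rangle$ and $B=\langle\mathbb{Z}/q,t\rangle$, normality of $\mathbb{Z}/q$ gives $A=\mathbb{Z}/q\cdot s$, $B=\mathbb{Z}/q\cdot t$, and $A\cap B=\mathbb{Z}/q\cdot(s\cap t)$, so that $[A:A\cap B]=[s:s\cap t]$ and $[B:A\cap B]=[t:s\cap t]$. Hence $A$ is $p$-adjacent to $B$ in $\Gamma_{p}(P(2,q))$ exactly when $s$ is $p$-adjacent to $t$ in $\Gamma_{p}(\Delta)$; since $s\mapsto\langle\mathbb{Z}/q,s\rangle$ is a bijection from subgroups of $\Delta$ onto the large subgroups, the induced subgraph on large subgroups is isomorphic to $\Gamma_{p}(\Delta)$. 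As $\Delta$ is abelian, hence nilpotent, every component of $\Gamma_{p}(\Delta)$ is complete (by \cite{BD15}, or by repeating the Hall $p'$-subgroup argument inside $\Delta$), so each component consisting of large subgroups is complete as well. Combining the three steps completes the proof.

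The step I expect to require the most care is the ``both small'' case: one must verify that Lemma~\ref{lem:claim} pins the Hall $p'$-subgroup down exactly in both directions (this is where the nilpotency hypothesis is genuinely used), and one must be careful that the conjugating elements $b^{m}$, $b^{n}$ do not interfere with the argument — which they do not, precisely because each small subgroup is abelian and conjugation sends Hall $p'$-subgroups to Hall $p'$-subgroups. The large case is then essentially a bookkeeping reduction to $\Gamma_{p}(\Delta)$ via the explicit intersection formula.
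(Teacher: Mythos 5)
Your proof is correct, and it follows the same macro-level decomposition as the paper (use Proposition~\ref{prop:subgroupclass} to split vertices into the groups containing $\mathbb{Z}/q$ and the conjugates of subgroups of $\Delta$, show the two families are never adjacent, then prove completeness within each family; the reduction of the ``large'' case to $\Gamma_p(\Delta)$ via $[\langle \mathbb{Z}/q,s\rangle : \langle \mathbb{Z}/q, s\cap t\rangle]=[s:s\cap t]$ is identical). Where you genuinely diverge is in the other two steps, and in both cases your route is cleaner. For the large--small non-adjacency, the paper passes through indices in $P(2,q)$ and invokes \cite[Lemma 9]{BD15}, whereas you simply observe that $q$ divides $[A:A\cap B]$ and $p\neq q$; this is more elementary and avoids the external lemma. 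For the small--small case, the paper splits into two cases according to whether $|s_i|$ is a $p$-power and then argues completeness by chaining adjacencies through the subgroup $b^nA_{n,i}b^{-n}$, while you upgrade Lemma~\ref{lem:claim} (applied symmetrically, plus uniqueness of the Hall $p'$-subgroup in a nilpotent group) to a clean characterization: two small subgroups are $p$-adjacent iff they share the same Hall $p'$-subgroup. Since that is an equivalence relation, components among small subgroups are automatically cliques, and the paper's Case~1 is absorbed as the class with trivial Hall $p'$-part. Your version is the tighter argument; the paper's buys nothing extra except that its Case~2 bookkeeping makes explicit which concrete subgroups lie in each component.
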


\begin{proof}
Let $\{ s_{i} \}_{i \in I}$ be an enumeration of all the subgroups of $\mathbb{Z}/(q-1)\times \mathbb{Z}/(q-1)$. Let $b$ be the generator of $\mathbb{Z}/{q}$, and $n\in \mathbb{Z}/{q}$.
It is straightforward to see that $[\left<\mathbb{Z}/{p},s_{i}\right>:\left<\mathbb{Z}/{p},s_{i}\cap s_{j}\right>]=[s_{i}:s_{i}\cap s_{j}]$, for all $i,j$.
Moreover, we claim that $\left<\mathbb{Z}/{q},s_{i}\right>$ is never $p$-adjacent to $\left<b^{n}s_{j}b^{-n}\right>$, for all $i,j$.
Indeed, suppose that $\left<\mathbb{Z}/{q},s_{i}\right>$ is $p$-adjacent to $\left<b^{n}s_{j}b^{-n}\right>$. It is clear the $q$ does not divides $|\left<b^{n}s_{j}b^{n}\right>|$, for all $j\in I$.
Note, 
\begin{equation*}
[P(2,q):\left<\mathbb{Z}/{q},s_{i}\right>]=\frac{q(q-1)^{2}}{q|s_{i}|}=\frac{(q-1)^{2}}{|s_{i}|}.
\end{equation*}
Since $q$ does not divide both $(q-1)^{2}$, then $q$ does not divide $\frac{(q-1)^{2}}{|s_{i}|}$. Therefore by \cite[Lemma 9]{BD15}, we will have that
\begin{equation*}
q\nmid \frac{q(q-1)^{2}}{|\left<b^{n}s_{i}b^{n}\right>|}
\end{equation*}
 Which is impossible because $q$ does not divide $|\left<b^{n}s_{j}b^{-n}\right>|$.
We conclude that the two types of subgroups appearing in Proposition \ref{prop:subgroupclass} never appear in the same component.

By our earlier claim we know that $[\left<\mathbb{Z}/{p}:s_{i}\right>:\left<\mathbb{Z}/{p}:s_{i}\cap s_{j}\right>]=[s_{i}:s_{i}\cap s_{j}]$, for all $i,j$. This implies that $\left<\mathbb{Z}/{q},s_{i}\right>$ is $p$-adjacent to $\left<\mathbb{Z}/{q},s_{j}\right>$ if and only if $s_{i}$ is $p$-adjacent to $s_{j}$. We know that all the $s_{i}$ are subgroup of $\mathbb{Z}/(q-1)\times \mathbb{Z}/(q-1)$ which is an abelian group, then by \cite[Theorem 1]{BD15} we know that the $p$-local commensurability graph are complete, then the component for $\left<\mathbb{Z}/{q},s_{i}\right>$ is also complete.

To finish, we will show that all components containing subgroups of the form $\left<b^{n}s_{i}b^{-n}\right>$ are also complete.
We break up the proof into two cases depending on whether $\left<b^{n}s_{i}b^{n}\right>$ has cardinality that is some power of $p$.

{\bf Case 1}: $|\left<b^{n}s_{i}b^{n}\right>|=|s_{i}|$ is some power of $p$.
It is clear that for all $s_{i_{1}},\dots s_{i_{k}}$ with order some power of $p$, we have $\left<b^{n}s_{i_{\alpha}}b^{-n}\right>$ is $p$-adjacent to $\left<b^{m}s_{i_{\beta}}b^{-m}\right>$, for $i_{1}\leq i_{\alpha},i_{\beta}\leq i_{k}$ and any $m,n$.
Now, let $|s_{i}|=p^{k}$ and $|s_{j}|=p^{m}a$, were $\gcd(p^{m},a)=1$ and $a\neq 1$.
Then $|\left<b^{n}s_{i}b^{-n}\right>|=p^{k}$ and $|\left<b^{v}s_{j}b^{v}\right>|=p^{m}a$. Thus, we have 
\begin{equation*}
|\left<b^{n}s_{i}b^{-n}\right>\cap\left<b^{v}s_{j}b^{v}\right>|=p^{l}
\end{equation*}
where $l\leq k,m$.
It follows from this that $A$ is not $p$-adjacent to $B$ for all $i,j,n,v$.
Therefore, by above argument we know that all the subgroups $\left<b^{n}s_{i}b^{-n}\right>$ with order some power of $p$ form a complete component.

{\bf Case 2}: Let $|s_{i}|$ not be a power of $p$, that is $|s_{i}|=p^{n}a$, where $\gcd(p^{n},a)=1$ and $a\neq 1$.
Then we may write
$$
s_i = H \times A,
$$
where $H$ is a $p$-Sylow subgroup and $|A| = a$.
It is clear that $\left<b^{n}s_{i}b^{-n}\right>$ is $p$-adjacent to $\left<b^{n}Ab^{-n}\right>$. That is, for any $n$ and $\left<b^{n}s_{i}b^{-n}\right>$ whose order is not a power of $p$, then there exist a subgroup $A_{n,i}$ of $s_{i}$ such that $\left<b^{n}s_{i}b^{-n}\right>$ is $p$-adjacent to $b^{n}A_{n,i} b^{-n}$.

If there exist another group $A^{'}$ which is $p$-adjacent to $b^n A_{n,i} b^{-n}$, then it is easy to see that $A^{'}$ is $p$-adjacent to $\left<b^{n}s_{i}b^{-n}\right>$.
Similarly, if there is another group $B=\left<b^{m}s_{j}b^{-m}\right>$ which is $p$-adjacent to any $A^{'}$ as above, then by Lemma \ref{lem:claim} we have $B$ is $p$-adjacent to $b^n A_{n,i} b^{-n}$, and hence is $p$-adjacent to $\left<b^{n}s_{i}b^{-n}\right>$. It follows that the component containing $\left<b^{n}s_{i}b^{-n}\right>$ is complete, as desired.
\end{proof}

%

We are left with $p = q$, so the proof of Theorem \ref{thm:compclassp} is completed with the next result.

\begin{proposition} 
Each component of $\Gamma_{p}(P(2,p))$ is
\begin{enumerate}
\item a star graph, or 
\item a complete graph with two vertices.
\end{enumerate}
\end{proposition}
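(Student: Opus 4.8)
The plan is to use Proposition~\ref{prop:subgroupclass} to describe every edge of $\Gamma_p(P(2,p))$ precisely, and then read off the components. Write $B = \mathbb{Z}/p = \langle b\rangle$ for the (normal, unique) $p$-Sylow subgroup and $T = \mathbb{Z}/(p-1)\times\mathbb{Z}/(p-1)$ for the complementary torus of diagonal matrices. By Proposition~\ref{prop:subgroupclass} every subgroup is of one of two kinds: \emph{Type 1}, namely $U_i := \langle B, s_i\rangle = B\cdot s_i$ for a subgroup $s_i\le T$, with $|U_i| = p\,|s_i|$; or \emph{Type 2}, namely $T_{i,n} := \langle b^n s_i b^{-n}\rangle$, whose order divides $(p-1)^2$ and is in particular coprime to $p$.

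First I would rule out all edges between subgroups of the same kind. If $C,D$ are Type 2, then $[C:C\cap D]$ divides $|C|$, which is coprime to $p$, so the commensurability index $[C:C\cap D][D:C\cap D]$ can be a power of $p$ only when it equals $1$, i.e.\ $C=D$. If $U_i,U_j$ are Type 1, then since $B\triangleleft G$ one has $U_i\cap U_j = B\cdot(s_i\cap s_j)$, hence $[U_i:U_i\cap U_j]=[s_i:s_i\cap s_j]$ divides $(p-1)^2$, and the same coprimality argument forces $U_i=U_j$. So every edge of $\Gamma_p(P(2,p))$ joins a Type 1 subgroup to a Type 2 subgroup.

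Next I would determine, for a fixed $U_i$, exactly which Type 2 subgroups are $p$-adjacent to it. The subgroup $T_{i,n}=b^n s_i b^{-n}$ lies in $U_i$ with index $p$, so $U_i$ is $p$-adjacent to every $B$-conjugate of $s_i$. Conversely, if $T_{j,m}$ is $p$-adjacent to $U_i$, then (its order being coprime to $p$) we need $[T_{j,m}:T_{j,m}\cap U_i]=1$, i.e.\ $T_{j,m}\le U_i$; then $[U_i:T_{j,m}]=p\,(|s_i|/|s_j|)$ is a power of $p$ while $|s_i|/|s_j|$ is coprime to $p$, so $|s_j|=|s_i|$. Thus $T_{j,m}$ is a subgroup of $U_i=B\rtimes s_i$ of order $|s_i|$, hence a complement to $B$; by Schur--Zassenhaus it is $U_i$-conjugate to $s_i$, and since $s_i$ is abelian the $U_i$-conjugates of $s_i$ are exactly the $B$-conjugates $\{\,b^k s_i b^{-k}\,\}=\{\,T_{i,k}\,\}$. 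Projecting to $G/B\cong T$ gives $s_j=s_i$. Hence the neighbourhood of $U_i$ is precisely the $B$-orbit $\mathcal O_i:=\{\,b^k s_i b^{-k}: k\in\mathbb{Z}/p\,\}$.

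Finally I would assemble the components. A Type 1 subgroup has only Type 2 neighbours, and each Type 2 subgroup $T_{j,m}$ (a $B$-conjugate of $s_j$) is a neighbour of $U_j$ and of no other Type 1 subgroup; so the component of $U_i$ is exactly $\{U_i\}\cup\mathcal O_i$, a star with centre $U_i$, and every subgroup lies in such a component (every $U_i$ as a centre, every $T_{j,m}$ as a leaf). It remains to count $|\mathcal O_i|$: since $|B|=p$ is prime, $|\mathcal O_i|\in\{1,p\}$, and $|\mathcal O_i|=1$ exactly when $B$ normalises $s_i$. For $t\in s_i$, $b t b^{-1}\in s_i$ would give $[b,t]\in B\cap s_i=\{e\}$, so $B$ normalises $s_i$ iff $B$ centralises $s_i$, iff $s_i$ consists of scalar matrices. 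In that case the component is the single edge $U_i-s_i$, a complete graph on two vertices; otherwise $|\mathcal O_i|=p\ge 2$, so the component has at least three vertices and is a genuine star (with $U_i$ the unique non-leaf). This yields the two alternatives of the statement. The step I expect to be the main obstacle is the third one: identifying the neighbourhood of a Type 1 subgroup, where the bookkeeping of the $p$-commensurability condition has to be combined with the conjugacy of complements and with the observation that the twisted subgroups $b^n s_i b^{-n}$ sweep out a full $B$-orbit, so that each star has precisely $p$ leaves rather than fewer; the remaining steps are elementary index computations on top of Proposition~\ref{prop:subgroupclass}.
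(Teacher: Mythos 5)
Your proof is correct and follows essentially the same route as the paper's: classify subgroups into the two types from Proposition~\ref{prop:subgroupclass}, kill same-type edges by coprimality of the relevant indices with $p$, and show the Type~1/Type~2 edges organize into stars centered at the groups $\left<\mathbb{Z}/p, s_i\right>$. Your Schur--Zassenhaus identification of the full neighbourhood of $\left<\mathbb{Z}/p, s_i\right>$ as the $B$-orbit of $s_i$, and the resulting criterion ($s_i$ scalar) for when the star degenerates to a single edge, is a slightly more explicit bookkeeping than the paper's dual argument (uniqueness of the Type~1 neighbour of each Type~2 vertex), but the substance is the same.
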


\begin{proof} 
By Proposition \ref{prop:subgroupclass}, we need only consider subgroups of the form $\left<\mathbb{Z}/{p},s_{i}\right>$ and $\left<b^{-n}s_{j}b^{n}\right>$.

We give a brief outline.
We first show that any two subgroups $\left<b^{-n}s_{i}b^{n}\right>$ and $\left<b^{-m}s_{j}b^{m}\right>$ are not $p$-adjacent if $i \neq j$ or $n \neq m$. Clearly, for any $i$ and $n$, $\left<\mathbb{Z}/{p},s_{i}\right>$ adjacent to $\left<b^{-n}s_{i}b^{n}\right>$. So, next, we will show that $\left<\mathbb{Z}/{p},s_{i}\right>$ is not $p$-adjacent to any $\left<b^{-n}s_{j}b^{n}\right>$ for $i \neq j$. Finally, we show that there is only one group of the form $\left<\mathbb{Z}/{p},s_{j}\right>$ that is adjacent to $\left<\mathbb{Z}/{p},s_{i}\right>$ for any fixed $i$.  Thus, all the components under consideration are star graphs with centers $\left< \mathbb{Z}/{p},s_{i}\right>$, or are complete graphs with two vertices.

Let $A_{n}=\left<b^{-n}s_{i}b^{n}\right>$ and $A_{m}=\left<b^{-m}s_{j}b^{m}\right>$, $A_{n}\neq A_{m}$. Let $p$ be a prime such that $p\nmid (q-1)^{2}$.
So if $A_{n}$ and $A_{m}$ are adjacent, then $[A_{n}:A_{n}\cap A_{m}]=p^{c}$,  so $c = 0$. By symmetry, $A_{n}$ is not adjacent to $A_{m}$ since $A_n \neq A_m$.

Now, as we note in the proof of Lemma \ref{lem:completecomp}, $\left<\mathbb{Z}/{p},s_{i}\right>$ is $p$-adjacent to $\left<\mathbb{Z}/{p},s_{j}\right>$ if and only if $s_{i}$ is $p$-adjacent to $s_{j}$. Since $q$ does not divide $(q-1)^{2}$ and every $s_{i}$, $s_{j}$ has order which divide $(q-1)^{2}$, then we have $s_{i}$ is not $p$-adjacent to $s_{j}$ for all $i,j$. Hence $\left<\mathbb{Z}/{p},s_{i}\right>$ is not $p$-adjacent to $\left<\mathbb{Z}/{p},s_{j}\right>$.

Finally, we will show that any group of the form $\left<\mathbb{Z}/{p},s_{i}\right>$ that is adjacent to $A_{n}$ is unique.
Clearly, $\left<\mathbb{Z}/{p},s_{i}\right>$ is $p$-adjacent to $\left<b^{n}s_{i}b^{-n}\right>$. Now, if $\left<\mathbb{Z}/{p},s_{j}\right>$ is also $p$-adjacent to $\left<b^{n}s_{i}b^{-n}\right>$, then we have 
\begin{equation*}
[\left<b^{n}s_{i}b^{-n}\right>:\left<\mathbb{Z}/{p},s_{j}\right>\cap\left<b^{n}s_{i}b^{-n}\right>]=p^{k}.
\end{equation*}
Since $p$ does not divides the order of $\left<b^{n}s_{i}b^{-n}\right>$, then we must have $k=0$, and so $\left<b^{n}s_{i}b^{-n}\right> \leq \left<\mathbb{Z}/{p},s_{j}\right>$.
Moreover, since $\left<\mathbb{Z}/{p},s_{j}\right>$ is $p$-adjacent to $\left<b^{n}s_{i}b^{-n}\right>$,
\begin{equation*}
[\left<\mathbb{Z}/{p},s_{j}\right>:\left<b^{n}s_{i}b^{-n}\right>]=p^{k},
\end{equation*}
which gives $k = 1$, since $p$ does not divide $|s_j|$.
Thus, $\left<\mathbb{Z}/{p},s_{j}\right> = \left< b, b^n s_i b^{-n} \right> = \left<\mathbb{Z}/{p},s_{i}\right>$, as desired.
\end{proof}

\bibliography{refs}
\bibliographystyle{amsalpha}

\end{document}